\theoremstyle{plain}
\newtheorem{theorem}{Theorem}
\newtheorem{lemma}[theorem]{Lemma}
\newcounter{rmkctr}
\numberwithin{equation}{section}
\newcommand{\dy}{\partial}
\newcommand{\ddt}[1]{\frac{\mathrm{d}{#1}}{\mathrm{d}{t}}}
\newcommand{\sfrac}[2]{{\textstyle\frac{#1}{#2}}}
\newcommand{\Zahl}{\mathbb{Z}}
\newcommand{\ex}{\mathrm{e}}
\newcommand{\im}{\mathrm{i}}
\newcommand{\eps}{\varepsilon}
\newcommand{\eikx}{\ex^{\im\kb\cdot\xb}}
\newcommand{\dtau}{\>\mathrm{d}\tau}
\newcommand{\dx}{\>\mathrm{d}\boldsymbol{x}}
\newcommand{\lapl}{\Delta}
\newcommand{\ilapl}{\Delta^{-1}}
\newcommand{\xb}{{\boldsymbol{x}}}
\newcommand{\vb}{{\boldsymbol{v}}}
\newcommand{\fv}{f_{\vb}^{}}
\newcommand{\kb}{{\boldsymbol{k}}}
\newcommand{\gb}{\nabla}
\newcommand{\divv}{\gb\!\cdot\!\vb}
\newcommand{\Proj}{{\sf P}}
\newcommand{\Pb}{\bar{\sf P}}
\newcommand{\Pt}{\tilde{\sf P}}
\newcommand{\Dom}{\mathscr{M}}
\newcommand{\ZL}{\Zahl_L}
\newcommand{\Attr}{\mathcal{A}}
\newcommand{\DN}{\mathcal{E}}
\newcommand{\w}{\omega}
\newcommand{\wb}{\bar{\w}}
\newcommand{\wt}{\tilde{\w}}
\newcommand{\pt}{\tilde{\psi}}
\newcommand{\fb}{\bar{f}}
\newcommand{\ft}{\tilde{f}}
\newcommand{\sst}[1]{{\scriptscriptstyle{#1}}}
\newcommand{\dw}{\delta\omega}
\newcommand{\dwt}{\delta\tilde{\w}}
\newcommand{\dps}{\delta\psi}
\newcommand{\dpsm}[1]{\dps^{\scriptscriptstyle{#1}}}
\newcommand{\dpst}{\delta\tilde{\psi}}
\newcommand{\dpstm}[1]{\dpst^{\scriptscriptstyle{#1}}}
\newcommand{\dwm}[1]{\dw^{\scriptscriptstyle{#1}}}
\newcommand{\dwtm}[1]{\dwt^{\scriptscriptstyle{#1}}}
\newcommand{\wfb}[1]{\wb^{\scriptscriptstyle{#1}}}
\newcommand{\ffb}[1]{\fb^{\scriptscriptstyle{#1}}}
\newcommand{\Gr}{\mathcal{G}}
\newcommand{\TB}{\mathcal{T}}
\newcommand{\kpz}{\kappa_0}
\newcommand{\kpf}{\kappa_f}
\newcommand{\ws}{\w^{\sharp}}
\newcommand{\pss}{\psi^{\sharp}}
\begin{document}

\title[Determining modes and nodes of $\beta$-plane NSE]%
{Navier--Stokes equations on the $\beta$-plane:\\
determining modes and nodes}

\author[Miyajima]{N.~Miyajima}
\email{naoko.miyajima@durham.ac.uk}
\author[Wirosoetisno]{D.~Wirosoetisno}
\email{djoko.wirosoetisno@durham.ac.uk}
\urladdr{http://www.maths.dur.ac.uk/\~{}dma0dw}
\address{Department of Mathematical Sciences\\
   University of Durham\\
   Durham\ \ DH1~3LE, United Kingdom}

\keywords{Navier--Stokes equations, beta plane, determining modes, determining nodes}
\subjclass[2010]{Primary: 35B40, 35B41, 76D05}


\begin{abstract}
We revisit the 2d Navier--Stokes equations on the periodic $\beta$-plane, with the Coriolis parameter varying as $\beta y$, and obtain bounds on the number of determining modes and nodes of the flow.
The number of modes {and nodes} scale as $c\,\Gr_0^{1/2} + c'(M/\beta)^{1/2}$ and $c\,\Gr_0^{2/3} + c'(M/\beta)^{1/2}$ respectively, where the Grashof number $\Gr_0=|f_\vb|_{L^2}^{}/(\mu^2\kpz^2)$ and $M$ involves higher derivatives of the forcing $f_\vb$.
For large $\beta$ (strong rotation), this results in fewer degrees of freedom than the classical (non-rotating) bound that scales as $c\,\Gr_0$.
\end{abstract}

\maketitle


\section{Introduction}\label{s:intro}

Understanding the behaviour of rotating fluid flows is fundamental to many problems in geophysical fluid dynamics.
The simplest rotating fluid model is arguably the 2d Navier--Stokes equations, which however is unaffected by constant (rigid body) rotation.
It is affected, however, by differential rotation, such as that in a rotating sphere or its simplified model, the $\beta$-plane.
In this case one expects on physical grounds that the flow will become more zonal (i.e.\ less dependent on the ``longitude'' $x$) as the rotation rate increases.

To quantify this, we decompose the (scalar) vorticity as $\omega(x,y,t)=\wb(y,t)+\wt(x,y,t)$, with the zonal part $\wb$ obtained by averaging $\omega$ over $x$.
In \cite{mah-dw:beta} and \cite{dw:nss2}, it was proved that the non-zonal part of the flow becomes small as $t\to\infty$, in the sense that $|\wt(t)|_{L^2}^2\le \eps M_0$ for sufficiently large $t$.
It was also proved that the global attractor $\Attr$ reduces to a point for $\eps$ sufficiently small (but still finite).
Naturally, this begs the question of how the number of degrees of freedom in the flow scales with $\eps$.
In the non-rotating case, the results on determining modes and attractor dimensions agreed (essentially, up to a logarithm) with those expected on physical grounds from the Kolmogorov theory, after two decades of effort \cite{foias-prodi:67,constantin-foias-temam:88,jones-titi:93}.

The present rotating case is more delicate, and there is as yet no physical consensus on the number of degrees of freedom as a function of $\eps$: as discussed in \cite[\S9.1.1]{vallis:aofd}, there are several plausible estimates of the Rhines wavenumber $\kappa_\beta$, roughly the smallest wavenumber (largest scale) that supports turbulent flows \cite{rhines:75,vallis-maltrud:93}.
These physical estimates depend only on the energy $|\vb|_{L^2}^2$ and enstrophy $|\omega|_{L^2}^2$, although arguably the arguments implicitly assume certain unspecified smoothness of the flows.

Extending the results from \cite{mah-dw:beta}, and using tools from \cite{jones-titi:92n,jones-titi:93}, in this paper we prove bounds on the number of determining modes and nodes related to the number of degrees of freedom in the rotating NSE.
Unlike the physical estimates in the previous paragraph, our rigorous results inevitably involve higher derivatives of the vorticity (and thus the forcing).
It is not clear at this point whether our bounds are optimal, particularly as one does not know what to expect on physical grounds.

A natural extension of our results is to bound the Hausdorff dimension of the global attractor $\Attr$.
This we have not been able to do, and it appears that current methods to estimate attractor dimensions (e.g., \cite{robinson:idds,doering-gibbon:aanse,ilyin-titi:08}) are not directly applicable to our problem.
Given a bound on the attractor dimension, an analogous bound on the number of determining nodes would follow from \cite{friz-robinson:01}: if $N>32\,\hbox{dim}_H\Attr$, then almost every set of $N$ nodes is determining.
We are not however aware of any result in the opposite direction (which is what is needed in our case).

We expect that our results could be extended to the more realistic case of the rotating sphere with minimal additional conceptual difficulty; cf.~\cite{dw:nss2}.
However, as the bounds obtained here may not be optimal, we do not do so in this paper.

\medskip\hbox to\hsize{\qquad\hrulefill\qquad}\medskip

We consider the two-dimensional rotating Navier--Stokes equations in the so-called $\beta$-plane approximation,
\begin{equation}\label{q:dvdt}\begin{aligned}
   &\dy_t \vb + \vb \cdot \gb \vb + \beta y \vb^{\perp} + \gb p = \mu \lapl \vb + \fv,\\
   &\divv = 0.
\end{aligned}\end{equation}
Here $\vb = (v_1, v_2)$ is the velocity of the fluid, $p$ is the pressure, $\mu$ is the kinematic viscosity and $\fv$ is the forcing on the velocity, assumed to be independent of time.
The term $\beta y\vb^\perp$, where $\vb^{\perp} := (-v_2, v_1)$, arises from the differentially rotating frame, which can be thought of as a linearised approximation of a region on a rotating sphere.
We take as our domain $\Dom = [0,L] \times [-L/2,L/2]$ with periodicity in both directions assumed.
We assume without loss of generality that
\begin{equation}\label{q:v0int}
   \int_{\Dom} \vb \dx= 0.
\end{equation}
For consistency with the periodic domain, we also assume the following symmetries:
\begin{equation}\label{q:velsym}\begin{aligned}
   &v_1(x,-y,t) = v_1(x,y,t),\\
   &v_2(x,-y,t) = -v_2(x,y,t),
\end{aligned}\end{equation}
with analogous symmetries imposed on $\fv$.

We drop all dimensions except length, so $\vb$ and $\fv$ have dimensions of length, $\gb$ has dimension (length)$^{-1}$ and $\mu$ has dimension (length)$^2$; the $L^p$ norm $|\cdot|_{L^p(\Dom)}$ has dimension (length)$^{2/p}$, with $|\cdot|_{L^\infty}^{}$ being naturally dimensionless.
Constants denoted by $c$ and numbered constants $c_i$ are dimensionless.

With this non-dimensionalisation, we take $\gb^\perp\cdot{}$(\ref{q:dvdt}a) to get
\begin{equation}\label{q:dwdt}
   \dy_t \w + \dy(\psi,\w) + \frac{\kpz}{\eps}\dy_{x}\psi = \mu\lapl\w + f,
\end{equation}
where the (scalar) vorticity is $\w:=\gb^\perp\cdot\vb=\dy_x v_2-\dy_y v_1$, which conveniently is dimensionless.
Here $\dy(\cdot,\cdot)$ denotes the Jacobian, i.e.\ $\dy(f,g):=\dy_xf\,\dy_yg-\dy_xg\,\dy_yf$, which has the property that
\begin{equation}
	(\dy(f,g),g)_{L^2(\Dom)}^{} = 0
\end{equation}
for all $f$, $g$ such that the expression is defined.
The forcing (on vorticity) is $f:=\gb^\perp\cdot\fv$, $\eps\sim1/\beta$ (both dimensionless) and $\kpz=2\pi/L$ is the Poincar\'e constant for $\Dom$.
For later use, we define the (dimensionless) parameter $\nu_0 := \mu\kpz^2$ and assume for convenience that $\nu_0\le1$ (we shall use the fact that $\ex^{\nu_0}<3$ below).

The streamfunction $\psi$ is defined uniquely by
\begin{equation}
   \psi := \ilapl\w \qquad\text{with}\quad \int_\Dom \psi \dx = 0.
\end{equation}
We note that due to the property of the curl,
\begin{equation}\label{q:0int}
   \int_{\Dom} \w \dx= 0.
\end{equation}
Moreover, the symmetries \eqref{q:velsym} imply
\begin{equation}\label{q:wsym}
   \w(x,-y,t) = -\w(x,y,t).
\end{equation}
It follows from the symmetries on $f_{\vb}$ that $f(x,-y,t)=-f(x,y,t)$ for all $\xb$ and $t$.
Thanks to \eqref{q:v0int} and \eqref{q:0int}, the $H^s$ norm is equivalent to
\begin{equation}
  |\gb^s\w|_{L^2}^2 := |(-\Delta)^{s/2}\w|_{L^2}^2.
\end{equation}

It is a classical result that, given $\fv$ and $\vb(0)\in L^2$, the NSE \eqref{q:dwdt} has a globally unique solution that is bounded only by the forcing (i.e.\ independently of the initial data), for sufficiently large times, in terms of the Grashof number
\begin{equation}\label{q:gr}
   \Gr = \frac{|\fv|_{L^2}^{}}{\mu^2\kappa_0^2} =: \Gr_0.
\end{equation}
Defining ``higher Grashof numbers'' by
\begin{equation}
   \Gr_m := \frac{|\gb^m\fv|_{L^2}^{}}{(\mu\kpz)^{2-m}},
\end{equation}
we can bound derivatives of the vorticity independently of the initial data,
\begin{equation}\label{q:wgm}
   |\gb^m\w(t)|_{L^2}^2 + \mu\int_0^t|\gb^{m+1}\w|_{L^2}^2\,\ex^{\nu_0(\tau-t)} \dtau \le c\,(m)\,\frac{\Gr_m^2(1+c'(m)\,\nu_0^2\,\Gr_0^2)^m}{(\mu\kpz)^{2m-2}}
\end{equation}
for all $t\ge T_m(|\vb(0)|_{L^2}^{},|\gb^{m-1}f|_{L^2}^{}; \mu)$ as long as $\Gr_m$ is defined.


\section{Background and Main Results}\label{s:results}

It was discovered fifty years ago \cite{foias-prodi:67} that the solutions of 2d NSE are determined essentially by a finite number of degrees of freedom.
Following Foias and Prodi, we consider two solutions $\w$ and $\w^\sharp$ of \eqref{q:dwdt} with the same $f\in H^{-1}$ but potentially different initial data $\vb(0)$ and $\vb^\sharp(0) \in L^2$,
\begin{equation}\label{q:beta1}
   \dy_t \w + \dy(\psi,\w) + \frac{\kpz}{\eps}\dy_{x}\psi \quad \enskip = \mu\lapl\w + f
\end{equation}
\begin{equation}\label{q:beta2}
   \dy_t \ws + \dy(\pss,\ws) + \frac{\kpz}{\eps}\dy_x \pss = \mu\lapl\ws + f,
\end{equation}
and note that their difference $\dw:=\w-\w^\sharp$ satisfies
\begin{equation}\label{q:dwbeta}
   \dy_t\dw + \dy(\pss,\dw) + \dy(\dps,\w) + \frac{\kpz}{\eps}\dy_x \dps = \mu\lapl\dw.
\end{equation}

We expand $\dw$ in Fourier series,
\begin{equation}
   \dw(\xb,t) = \sum_{\kb\in\ZL}\,\delta\w_\kb(t)\,\ex^{\im\kb\cdot\xb}
\end{equation}
where $\ZL:=\{(2\pi l_1/L,2\pi l_2/L):(l_1,l_2)\in\Zahl^2\}$.
All wavenumber sums, unless otherwise stated, are henceforth understood to be over $\ZL$.
Introducing a threshold wavenumber $\kappa$, we define the $L^2$ projection $\Proj_\kappa$ and
\begin{align}
   \dwm<(\xb,t) &:= \Proj_\kappa\dw(\xb,t) & &:= \sum_{|\kb|\leq\kappa}\,\delta\w_\kb(t) \eikx,\hbox to 70pt{}\\
   \dwm>(\xb,t) &:= \dw(\xb,t) - \dwm<(\xb,t) & &\,= \sum_{|\kb|>\kappa}\,\delta\w_\kb(t) \eikx.
\end{align}

The central idea is that if one takes $\kappa$ sufficiently large, the behaviour of the NSE in the long-time limit is determined only by the low ``determining'' modes (i.e.\ $\Proj_{\kappa}\,\w$), in the sense that if $|\Proj_\kappa\dw(t)|_{L^2(\Dom)}^{}\to0$ as $t\to\infty$, then also $|\dw(t)|_{L^2(\Dom)}^{}\to0$ as $t\to\infty$.
The bound on the number of determining modes was improved considerably in \cite{foias-manley-temam-treve:83}, approaching up to a logarithm what one expects on physical grounds \cite{manley-treve:82}.
Subsequently, Jones and Titi \cite{jones-titi:93} obtained a bound free of the ``spurious'' logarithmic term:

\begin{theorem}\label{t:jt93m}
Let $\dw$ satisfy \eqref{q:dwbeta}.
There exists an absolute constant $c_1$ such that if
\begin{equation}\label{q:jt93m}
   \kappa/\kpz \ge c_1\,\Gr_0^{1/2},
\end{equation}
then
\begin{equation*}
   \lim_{t \to \infty} |\Proj_\kappa\dw(t)|_{L^2(\Dom)}^{} = 0
   \quad \text{implies} \quad
   \lim_{t \to \infty} |\dw(t)|_{L^2(\Dom)}^{} = 0.
\end{equation*}
\end{theorem}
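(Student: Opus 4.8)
The plan is to run the standard Foias--Prodi high-mode energy estimate on the difference equation \eqref{q:dwbeta}, the one genuinely new feature being that the differential-rotation term makes no contribution to the high-mode balance. Once this is seen, the problem is exactly the one solved by Jones and Titi, and their logarithm-free estimate applies.

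First I would apply $I-\Proj_\kappa$ to \eqref{q:dwbeta} and pair the result with $\dwm>=(I-\Proj_\kappa)\dw$ in $L^2(\Dom)$, obtaining
\begin{equation*}
  \sfrac12\,\ddt{}\,|\dwm>|_{L^2}^2 + \mu\,|\gb\dwm>|_{L^2}^2
  = -(\dy(\pss,\dw),\dwm>) - (\dy(\dps,\w),\dwm>) - \frac{\kpz}{\eps}\,(\dy_x\dps,\dwm>).
\end{equation*}
The rotation term vanishes: since $I-\Proj_\kappa$ is self-adjoint and commutes with $\dy_x$ and $\ilapl$, one has $(\dy_x\dps,\dwm>)=(\dy_x\ilapl\dwm>,\dwm>)$, and the operator $\dy_x\ilapl$ has the purely imaginary Fourier symbol $-\im k_1/|\kb|^2$, hence is skew-adjoint and annihilates the real field $\dwm>$. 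This is the crux of the matter: the $\beta$-plane term drops out of the high-mode enstrophy budget, which is precisely why the threshold \eqref{q:jt93m} carries no dependence on $\beta$ (equivalently $\eps$) nor on $M$.

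For the two remaining terms I would split $\dw=\dwm<+\dwm>$ and $\dps=\ilapl\dwm<+\ilapl\dwm>$. Using the Jacobian orthogonality $(\dy(\pss,\dwm>),\dwm>)=0$ and the antisymmetry $(\dy(\dps,\w),\dwm>)=-(\dy(\dps,\dwm>),\w)$, every contribution either contains a factor of the low modes $\dwm<$ — and is collected into a forcing $h(t)$ — or is the genuinely quadratic high-mode term $(\dy(\ilapl\dwm>,\dwm>),\w)$, controlled by the enstrophy of $\w$. Bounding the latter by the logarithm-free trilinear estimate of Jones--Titi, exploiting the high-mode gain $|\gb\ilapl\dwm>|_{L^2}\le\kappa^{-1}|\dwm>|_{L^2}$, absorbing the top-order factor into $\mu|\gb\dwm>|_{L^2}^2$ by Young's inequality, and using the Poincar\'e inequality $|\gb\dwm>|_{L^2}^2\ge\kappa^2|\dwm>|_{L^2}^2$, one is led to a differential inequality of the form
\begin{equation*}
  \ddt{}\,|\dwm>|_{L^2}^2 + \bigl(\mu\kappa^2-\gamma(t)\bigr)\,|\dwm>|_{L^2}^2 \le h(t),
\end{equation*}
where $\gamma$ is built from the enstrophy and palinstrophy of $\w,\ws$, whose exponentially weighted time integrals are furnished by \eqref{q:wgm} with $m=0$ and thus controlled by $\Gr_0$, while $h$ is built from $\dwm<$.

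Finally I would close with the generalized Gronwall (comparison) lemma: choosing $c_1$ in \eqref{q:jt93m} so that $\mu\kappa^2$ exceeds the relevant time-average of $\gamma$, the coefficient $\mu\kappa^2-\gamma(t)$ is positive on average; since $|\dwm<(t)|_{L^2}\to0$ by hypothesis we have $h(t)\to0$, and the lemma yields $|\dwm>(t)|_{L^2}\to0$. As $|\dw|_{L^2}^2=|\dwm<|_{L^2}^2+|\dwm>|_{L^2}^2$, the conclusion follows. I expect the delicate step to be the nonlinear estimate above: a crude bound (for instance one passing through $|\vb|_{L^\infty}$ via Agmon's inequality) produces both a spurious logarithm and the suboptimal requirement $\kappa/\kpz\gtrsim\Gr_0$, so reaching the sharp exponent $\Gr_0^{1/2}$ rests entirely on deploying the Jones--Titi trilinear estimate in the form that retains the enstrophy of $\w$ to the correct power.
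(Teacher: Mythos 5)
Your overall plan is the right one, but note first that the paper never actually proves Theorem~\ref{t:jt93m}: it is quoted from Jones and Titi \cite{jones-titi:93}, with the remark that the bound holds with or without rotation. The honest comparison is therefore with the paper's proof of Theorem~\ref{t:modes}, which is precisely this argument plus a zonal refinement, and your sketch reproduces its skeleton step for step: pairing \eqref{q:dwbeta} with $\dwm>$, the vanishing of the $\kpz/\eps$ term (the paper gets this by integration by parts; your skew-adjointness of $\dy_x\ilapl$ on the high modes is the same fact), the sorting of the nonlinear terms into pieces carrying $\dwm<$ (which feed the $\beta$ of Lemma~\ref{t:gronwall}) plus one genuinely high-mode quadratic term, absorption by Young and the reverse Poincar\'e inequality \eqref{q:arrPoi1}, and the generalised Gronwall lemma. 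Up to that point the proposal is correct.

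The problem is your final paragraph, which gets the crux backwards; had you filled in the ``delicate step'' as described there, you would not have reached $\Gr_0^{1/2}$. The logarithm-free Jones--Titi estimate is not an alternative to ``passing through $L^\infty$ via Agmon'': it \emph{is} Agmon's inequality \eqref{q:ag2}, applied to the high-mode streamfunction gradient rather than to the full velocity. The sharp chain (the paper's \eqref{q:modes3}) is
\begin{equation*}
  |(\dy(\dpsm>,\w),\dwm>)|
  \le |\gb\dpsm>|_\infty^{}\,|\gb\w|\,|\dwm>|
  \le c\,|\gb\dpsm>|^{1/2}|\gb\dwm>|^{1/2}\,|\gb\w|\,|\dwm>|
  \le \frac{c}{\kappa}\,|\gb\dwm>|\,|\dwm>|\,|\gb\w|,
\end{equation*}
by \eqref{q:ag2} and \eqref{q:arrPoi1}, followed by Young, so that $\gamma(t)=c\,|\gb\w|^2/(\mu\kappa^2)$. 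What must be retained ``to the correct power'' is thus the \emph{palinstrophy} $|\gb\w|^2$, not the enstrophy: its time average is $\le c\,\nu_0\Gr_0^2$ by \eqref{q:wgm} with $m=0$, and the Gronwall condition $\mu^2\kappa^4\gtrsim\nu_0\Gr_0^2$ gives $\kappa/\kpz\gtrsim\Gr_0^{1/2}$. The suboptimal $\kappa/\kpz\gtrsim\Gr_0$ comes from the other grouping --- bounding your term $(\dy(\ilapl\dwm>,\dwm>),\w)$ by $|\gb\dpsm>|_\infty|\gb\dwm>|\,|\w|$, which keeps the enstrophy $|\w|$ pointwise in time against \emph{two} factors of $|\gb\dwm>|$ and so cannot be closed by Young against $\mu|\gb\dwm>|^2$ --- while the spurious logarithm in the older literature comes from the Brezis--Gallouet inequality, not from Agmon. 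So in the write-up you should undo your second antisymmetry flip (work with $(\dy(\dpsm>,\w),\dwm>)$ directly), apply Agmon to $\gb\dpsm>$, and keep $|\gb\w|$; with that one correction the argument closes exactly as the paper's (modulo the fact that, in this paper's normalisation with unit time windows in Lemma~\ref{t:gronwall}, the constant comes out as $c\,\nu_0^{-1/4}$ rather than absolute).
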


\noindent
We remark that this bound supports the physical intuition that turbulence is extensive, in the sense that if one were to merge two similar systems (having the same dimensions and Grashof numbers), the number of degrees of freedom (viz.\ determining modes), which scales as $(\kappa/\kpz)^2$, would double.

Similarly, following \cite{jones-titi:93} and \cite{foias-temam:84}, we call a set of points $\mathcal{E} = \{\xb_1,\cdots,\xb_N\} \subset \Dom$ determining nodes if
\begin{equation}
	\lim_{t\to\infty} \dw(\xb_i,t) = 0 \quad \forall i \in \{1, \cdots , N\} \quad \text{implies} \quad \lim_{t\to\infty}|\dw(t)|_{L^2(\Dom)}^{} = 0.
\end{equation}
Foias and Temam \cite{foias-temam:84} first proved the existence of such a set and gave a bound on the maximal distance between individual nodes, while Jones and Titi \cite{jones-titi:93} gave the following qualitatively optimal bound on the number of determining nodes.
\begin{theorem}\label{t:jt93n}
Let $\dw$ satisfy \eqref{q:dwbeta}.
There exists an absolute constant $c_2$ and a set of determining nodes $\mathcal{E} = \{\xb_1,\cdots,\xb_N\}$, where
\begin{equation}\label{q:jt93n}
	N \geq c_2 \,\Gr_0,
\end{equation}
i.e.\ $\lim_{t\to\infty}\dw(\xb_i,t) = 0$ for $i \in \{1, \cdots , N\}$ implies that $\lim_{t\to\infty}|\dw(t)|_{L^2(\Dom)}^{} = 0$.
\end{theorem}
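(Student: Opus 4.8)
The plan is to derive an $L^2$ differential inequality for the difference vorticity $\dw$ satisfying \eqref{q:dwbeta}, feed it into a Jones--Titi nodal interpolation inequality, and close with a generalised Gronwall argument. First I would pair \eqref{q:dwbeta} with $\dw$ in $L^2(\Dom)$. The advective term drops by the orthogonality property, $(\dy(\pss,\dw),\dw)=0$, and the rotation term vanishes as well: writing $\dw=\lapl\dps$ and using periodicity, $(\dy_x\dps,\dw)=(\dy_x\dps,\lapl\dps)=-\tfrac12\int_\Dom\dy_x|\gb\dps|^2\dx=0$, exactly as in the non-rotating case, so the $\beta$-term does not enter the energy balance. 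This leaves $\tfrac12\frac{\mathrm{d}}{\mathrm{d}t}|\dw|_{L^2}^2+\mu|\gb\dw|_{L^2}^2=-(\dy(\dps,\w),\dw)$.

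Next I would estimate the single surviving nonlinear term. Using the cyclic identity $(\dy(\dps,\w),\dw)=-(\dy(\dps,\dw),\w)$, the relation $\dps=\ilapl\dw$, and the Ladyzhenskaya/Agmon inequalities, I would bound it by $\tfrac{\mu}{2}|\gb\dw|_{L^2}^2+K|\dw|_{L^2}^2$, absorbing the gradient factor into the dissipation. The coefficient $K$ is governed by the enstrophy of $\w$, so the uniform bounds \eqref{q:wgm} give $K\le c\,\mu\kpz^2\,\Gr_0$ for all large $t$ independently of the initial data, yielding $\frac{\mathrm{d}}{\mathrm{d}t}|\dw|_{L^2}^2+\mu|\gb\dw|_{L^2}^2\le 2K|\dw|_{L^2}^2$. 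Care is needed here: to obtain the clean exponent $\Gr_0$ (consistent with the mode count of Theorem~\ref{t:jt93m}) rather than a product such as $\Gr_0\Gr_1$, the nonlinearity must be controlled through the enstrophy alone, following \cite{jones-titi:93}.

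Finally I would invoke the nodal interpolation inequality of \cite{jones-titi:93}: partitioning $\Dom$ into $N$ congruent squares and placing one node $\xb_i$ in each, one has $|\dw|_{L^2}^2\le c\,\kpz^{-2}\eta(t)^2+c\,N^{-1}\kpz^{-2}|\gb\dw|_{L^2}^2+R$, where $\eta(t):=\max_i|\dw(\xb_i,t)|$ and $R$ is a higher-order remainder. Using the gradient term to turn the dissipation into coercivity, and choosing $N\ge c_2\,\Gr_0$ so that $\mu N\kpz^2/c$ dominates $2K$, I would arrive at $\frac{\mathrm{d}}{\mathrm{d}t}|\dw|_{L^2}^2+\alpha|\dw|_{L^2}^2\le\beta(t)$ with $\alpha>0$ fixed and $\beta(t)\to0$ as $\eta\to0$, after which a generalised Gronwall lemma forces $|\dw(t)|_{L^2}\to0$, which is the assertion.

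I expect the main obstacle to be the remainder $R$: it is controlled by $|\gb^2\dw|_{L^2}$ and hence does \emph{not} vanish merely because $\eta\to0$. Showing that it does not spoil the forcing $\beta(t)$ --- that is, that $\beta(t)\to0$ genuinely rather than settling at an $O(N^{-2})$ floor --- is the delicate heart of the argument, and it is here that the uniform higher-enstrophy bounds \eqref{q:wgm} for both $\w$ and $\ws$, together with the sharp logarithm-free form of the node inequality, are essential. The $\beta$-specific content is comparatively minor, since the rotation term was already seen to be absent from the $L^2$ balance.
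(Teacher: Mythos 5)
A preliminary remark: the paper never proves Theorem \ref{t:jt93n} --- it is quoted verbatim from \cite{jones-titi:93} --- so the natural comparison is with the paper's own nodal argument, the proof of Theorem \ref{t:nodes} in \S\ref{s:pf-nodes}; your skeleton (the identity \eqref{q:nodes0}, the vanishing of the $\kpz/\eps$ term, a nodal interpolation inequality, Lemma \ref{t:gronwall}) is indeed that one. The first genuine gap is your estimate of the nonlinear term. A bound of the form $|(\dy(\dps,\w),\dw)|\le\sfrac{\mu}{2}|\gb\dw|^2+K|\dw|^2$ with $K\le c\,\mu\kpz^2\,\Gr_0$ is not obtainable by ``absorbing the gradient factor into the dissipation'' with enstrophy information alone. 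The available absorptions give, for instance,
\begin{equation*}
   |(\dy(\dps,\w),\dw)| \le |\gb\dps|_\infty^{}\,|\gb\w|\,|\dw|
	\le c\,\kpz^{-1/2}|\dw|^{3/2}|\gb\dw|^{1/2}|\gb\w|
	\le \frac{\mu}{2}|\gb\dw|^2 + \frac{c\,|\gb\w|^{4/3}}{(\mu\kpz^2)^{1/3}}\,|\dw|^2,
\end{equation*}
using \eqref{q:ag2} and $|\gb\dps|\le\kpz^{-1}|\dw|$; by \eqref{q:wgm} and \eqref{q:intineq1} the time average of this coefficient is $c\,\nu_0^{1/3}\Gr_0^{4/3}$, so the coercivity $c\,\nu_0 N$ must beat $\Gr_0^{4/3}$, not $\Gr_0$ (the cyclic-identity/Ladyzhenskaya variant gives $\nu_0^{1/2}\Gr_0^2$, the pure Agmon variant $\nu_0\,\Gr_0^4$). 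The linear count $N\sim\Gr_0$ in \cite{jones-titi:93}, and likewise in \eqref{q:nodes1}--\eqref{q:nodes3} here, comes from a different mechanism: $|\gb\dps|_\infty$ is \emph{not} interpolated against $|\dw|,|\gb\dw|$, but is estimated by the nodal inequality \eqref{q:jtnodes2} applied to $u=\gb\dps$ (for which $\lapl u=\gb\dw$), so that after Young the coefficient of $|\dw|^2$ carries the crucial factor $L^2/(\mu N)$; the Gronwall condition then reads $\nu_0 N\gtrsim\Gr_0^2/N$, i.e.\ $N\gtrsim\Gr_0$ (up to $\nu_0$-factors). Because your $K$ is $N$-independent, no choice of constants rescues the exponent: the nodal inequality must enter \emph{inside} the nonlinear estimate, not only at the end.

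The second gap is the remainder you yourself flag, and it is fatal in the form you set up, not a delicate technicality. Applying the interpolation inequality to $u=\dw$ produces the term $c\,L^4N^{-2}|\lapl\dw|^2$; the balance \eqref{q:nodes0} dissipates only $|\gb\dw|^2$, and $|\lapl\dw|\le|\lapl\w|+|\lapl\ws|$ is merely \emph{bounded} by \eqref{q:wgm} (requiring $\Gr_2$, already more than enstrophy) --- it does not become small as $\eta(\dw)\to0$. Hence your $\beta(t)$ has an $O(N^{-2})$ floor, and Lemma \ref{t:gronwall} (or any Gronwall argument) yields only $\limsup_{t\to\infty}|\dw(t)|^2\le c\,N^{-2}\sup_t|\lapl\dw|^2/\alpha$, an ``approximately determining'' statement strictly weaker than the theorem. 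The resolution used in \cite{jones-titi:93}, and reproduced in \S\ref{s:pf-nodes}, is structural rather than technical: every application of \eqref{q:jtnodes1}--\eqref{q:jtnodes2} is made with $u=\gb\dps$, never with $\dw$, so the only second-order quantity ever produced is $|\lapl\gb\dps|=|\gb\dw|$, exactly what the dissipation controls; the coercivity step is then $|\dw|^2=|\gb(\gb\dps)|^2\le c_\eta N\,\eta(\gb\dps)^2+c_\eta L^2N^{-1}|\gb\dw|^2$, with no remainder at all. Note the consequence that the nodal values driving the argument are those of the velocity difference $\gb\dps(\xb_i,t)$ (cf.\ the right-hand side of \eqref{q:ncol0}), not of the vorticity difference; your plan, which tracks $\eta(\dw)$, cannot be connected to this scheme without an additional argument.
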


The bounds in \eqref{q:jt93m} and \eqref{q:jt93n} are qualitatively equivalent, i.e.\ they involve the same number of degrees of freedom (possibly up to a constant).
They are also independent of the rotation rate $\eps^{-1}$, i.e.\ they hold with or without rotation.
On physical grounds, however, one expects that under a differential rotation, the number of determining modes and nodes would decrease as the rotation rate increases.

To this end, we begin by splitting the vorticity into its zonal (independent of $x$) and non-zonal components,
\begin{equation}
   \wb(y,t) := \frac{1}{L}\int_0^L \w(x,y,t) \>\mathrm{d}x
   \quad\text{and}\quad
   \wt(x,y,t) := \w(x,y,t) - \wb(y,t).
\end{equation}
For convenience, we also define projections to the zonal and non-zonal components,
\begin{equation}
   \Pb\omega := \wb
   \quad\text{and}\quad
   \Pt\omega := (1-\Pb)\,\omega = \wt.
\end{equation}
These are orthogonal projections in $H^m$, commuting with $\Proj_\kappa$.
Moreover, they satisfy
\begin{equation}\label{q:bb0}
	\dy(\rho,\gamma) = 0 \qquad \text{whenever} \qquad \dy_x\rho = \dy_x\gamma = 0.
\end{equation}

The key ingredient for the results in this paper is the bound on the non-zonal component $\wt$ from \cite{mah-dw:beta}.
Here we state it in a form that shows the explicit dependence on $\Gr_m$:
\begin{theorem}\label{t:aw}
Assume that the initial data $\vb(0) \in L^2(\Dom)$ and that $|\lapl f|_{L^2} < \infty$.
Then there exist a $\TB_0(|\vb(0)|_{L^2}^{},|\lapl f|_{L^2}^{};\mu)$ and a constant $c_3(\nu_0)$ such that
\begin{align}
   &|\wt(t)|_{L^2}^2 + \mu\int_t^{t+1} |\gb \wt(\tau)|_{L^2}^2 \dtau \le \eps M_0/\kappa_0^2\label{q:aw}\\
   &|\wt(t)|_{L^2}^2 + \mu\int_0^{t} |\gb \wt(\tau)|_{L^2}^2 \ex^{\nu_0(\tau-t)}\dtau \le \eps M_0/\kappa_0^2\label{q:aw'}
\end{align}
for all $t \ge \TB_0$, where
\begin{equation}\label{q:m0}
   M_0 = c_3\,\Gr_2\Gr_3(1+\Gr_0^2).
\end{equation}
\end{theorem}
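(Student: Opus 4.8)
The plan is to derive a closed evolution equation for the non-zonal vorticity $\wt=\Pt\w$ and to extract the advertised factor $\eps$ from the fast Coriolis term by exploiting that the forcing is steady. Applying $\Pt$ to \eqref{q:dwdt} and using \eqref{q:bb0} to discard the zonal--zonal interaction gives
\[
   \dy_t\wt + \frac{\kpz}{\eps}\,\dy_x\ilapl\wt + N = \mu\lapl\wt + \ft,
   \qquad N:=\dy(\pb,\wt)+\dy(\pt,\wb)+\Pt\dy(\pt,\wt),
\]
where $\pt=\ilapl\wt$, $\dy_x\psi=\dy_x\pt$ is automatically non-zonal, and I have written the Coriolis term as $\tfrac{\kpz}{\eps}\dy_x\ilapl\wt$. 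The relevant structural facts are the Jacobian identity $(\dy(g,h),h)=0$, the self-adjointness of $\Pt$, and the skew-adjointness of $\dy_x\ilapl$, so that the Coriolis operator conserves every quadratic energy.

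A direct enstrophy balance is instructive but insufficient: pairing the equation with $\wt$ kills $(\dy(\pb,\wt),\wt)$ and $(\Pt\dy(\pt,\wt),\wt)$ by the Jacobian identity and the whole Coriolis term by skew-adjointness, leaving
\[
   \tfrac12\,\frac{\mathrm d}{\mathrm d t}|\wt|_{L^2}^2 + \mu|\gb\wt|_{L^2}^2 = -\,(\dy(\pt,\wb),\wt) + (\ft,\wt).
\]
The rotation has disappeared, and the forcing term $(\ft,\wt)$ is $O(1)$, so this balance on its own yields only $|\wt|_{L^2}^2=O(1)$ and cannot see the suppression of $\wt$. On the other hand the remaining coupling $(\dy(\pt,\wb),\wt)=\int\dy_x\pt\,\dy_y\wb\,\wt$ is quadratic in the non-zonal quantities and, since $|\dy_x\pt|_{L^2}\le\kpz^{-1}|\wt|_{L^2}$, is controlled by $\kpz^{-1}|\dy_y\wb|_{L^\infty}|\wt|_{L^2}^2$ with $\wb$ bounded a priori through \eqref{q:wgm}; it is therefore consistent with smallness once the forcing has been removed.

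The crux is to remove that $O(1)$ forcing by exploiting that $f$ is steady while the rotation is fast. I would introduce the steady response $\wt_*$ of the leading Coriolis--forcing balance $\tfrac{\kpz}{\eps}\dy_x\ilapl\wt_*=\ft$, which is solvable on non-zonal modes because $|k_1|\ge\kpz$ there, and which is $O(\eps)$ by construction. The shifted variable $W:=\wt-\wt_*$ then satisfies an equation of the same form but with the $O(1)$ forcing $\ft$ replaced by the $O(\eps)$ residual $\mu\lapl\wt_*$, so that an enstrophy estimate for $W$ --- no longer obstructed by the steady forcing --- produces the factor $\eps$. This is the main obstacle: the skew-adjointness that annihilates the Coriolis term in any quadratic balance also renders naive attempts to ``see'' the rotation circular, and inverting $\dy_x\ilapl$ on non-zonal modes costs spatial derivatives. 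The estimate must therefore be organised carefully --- integrating by parts so that the lost derivatives fall on the steady, $\eps$-small correction $\wt_*$ rather than on $W$ --- so that only the a priori bounds \eqref{q:wgm} up to third order are invoked, which is precisely what the hypothesis $|\lapl f|_{L^2}<\infty$ guarantees.

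Finally I would assemble the constant and conclude. The quadratic coupling contributes one factor controlled by \eqref{q:wgm} and the derivative-loaded residual $\mu\lapl\wt_*$ another, while the weights $(1+c'\nu_0^2\Gr_0^2)^m$ in \eqref{q:wgm} generate the factor $(1+\Gr_0^2)$; together these give $M_0=c_3\,\Gr_2\Gr_3(1+\Gr_0^2)$. The resulting differential inequality, schematically $\tfrac{\mathrm d}{\mathrm d t}|\wt|_{L^2}^2+\mu|\gb\wt|_{L^2}^2\le \eps\,M_0/\kpz^2$ for $t\ge\TB_0$, yields \eqref{q:aw} by a uniform (local-in-time) Gronwall argument, and, after multiplication by $\ex^{\nu_0\tau}$ and use of the Poincar\'e inequality $\mu|\gb\wt|_{L^2}^2\ge\nu_0|\wt|_{L^2}^2$, yields the exponentially weighted bound \eqref{q:aw'} upon integrating from $0$ to $t$.
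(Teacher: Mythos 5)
First, a point of reference: the paper does not prove Theorem~\ref{t:aw} at all --- it is imported from \cite{mah-dw:beta} (rescaled, and with $\Gr_2\Gr_3$ replacing $\Gr_3^2$), so your attempt has to be measured against the proof in that reference. Your diagnosis and your corrector idea for the forcing are essentially right: skew-adjointness does kill the Coriolis term in every quadratic balance, and removing the $O(1)$ term $(\ft,\wt)$ via $\wt_*$ with $\tfrac{\kpz}{\eps}\dy_x\ilapl\wt_*=\ft$ is equivalent to the device used there, namely writing $(\ft,\wt)=-(F,\dy_x\pt)$ with $F:=(\dy_x\ilapl)^{-1}\ft$ and using the equation to trade $\dy_x\pt$ for $\tfrac{\eps}{\kpz}\bigl(\mu\lapl\wt+\ft-\dy_t\wt-\Pt\dy(\psi,\w)\bigr)$, the $\dy_t\wt$ piece being the total derivative $\tfrac{\mathrm{d}}{\mathrm{d}t}(F,\wt)$ because $f$ is steady.

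The genuine gap is your treatment of the coupling term $(\dy(\pt,\wb),\wt)=\int\dy_x\pt\,\dy_y\wb\,\wt\dx$, which survives your shift unchanged as $(\dy(\ilapl W,\wb),W)$ and which you dismiss as quadratic in the non-zonal quantities and hence ``consistent with smallness''. Consistency is not closure. Your own bound for it is $c\,\kpz^{-1}|\dy_y\wb|_{L^\infty}^{}|W|_{L^2}^2$, and by \eqref{q:wgm} the coefficient $\kpz^{-1}|\dy_y\wb|_{L^\infty}^{}$ is of the size of (powers of) the Grashof numbers, whereas the dissipation supplies only $\mu|\gb W|_{L^2}^2\ge c\,\nu_0|W|_{L^2}^2$. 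For large Grashof number the resulting inequality
\begin{equation*}
   \ddt{}|W|_{L^2}^2 + c\,\nu_0|W|_{L^2}^2
   \le \frac{c}{\kpz}|\dy_y\wb|_{L^\infty}^{}|W|_{L^2}^2 + O(\eps)
\end{equation*}
has a net positive Gronwall exponent, and a bootstrap fails as well: at the putative boundary $|W|_{L^2}^2=\eps M_0/\kpz^2$ the bad term beats the dissipative one no matter how large $M_0$ is chosen, since both scale linearly in the bound. The proof in \cite{mah-dw:beta} closes precisely by subjecting this term to the \emph{same} $\eps$-extraction as the forcing: substitute $\dy_x\pt=\tfrac{\eps}{\kpz}\bigl(\mu\lapl\wt+\ft-\dy_t\wt-\Pt\dy(\psi,\w)\bigr)$ inside it, absorb the $\dy_t\wt$ contribution into a perturbed energy of the form $\tfrac12|\wt|_{L^2}^2-\tfrac{\eps}{2\kpz}\int\dy_y\wb\,\wt^2\dx-\tfrac{\eps}{\kpz}(F,\wt)$ (whose $O(\eps)$ corrections are harmless), and estimate what remains --- now all carrying an explicit $\eps$, and involving $\dy_t\dy_y\wb$ through the $\wb$-equation --- via \eqref{q:wgm}; this is where $\Gr_2\Gr_3(1+\Gr_0^2)$ and the hypothesis $|\lapl f|_{L^2}<\infty$ actually enter. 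A secondary but real problem: your prescription that the derivatives lost in inverting $\dy_x\ilapl$ should ``fall on the steady correction $\wt_*$'' is backwards, since derivatives of $\wt_*$ are derivatives of $f$: already $|\gb\wt_*|_{L^2}$ (and a fortiori $|\lapl\wt_*|_{L^2}$, which your residual $\mu\lapl\wt_*$ invokes) requires $f\in H^3$ or worse, beyond the hypothesis $f\in H^2$; the derivatives must instead land on $\wt$, $\w$, $\psi$, whose higher norms \eqref{q:wgm} controls given $|\lapl f|_{L^2}<\infty$.
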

\noindent
We note that our $M_0$ is $\kappa_0^2\,M_0$ in \cite{mah-dw:beta}; we have also tightened the bound slightly (this is obvious from the proof), with $\Gr_2\,\Gr_3$ in place of $\Gr_3^2$ in \cite{mah-dw:beta}.

\medskip
For our tighter $\eps$-dependent bounds on the determining modes, it is interesting to consider several forms of zonal forcing often used in numerical simulations of 2d turbulence.
One case is where $\fb$ is bandwidth-limited, in the sense that there is a (modest) $\kpf$ such that
\begin{equation}\label{q:fkpf}
  \fb =\Proj_{\kpf}\fb.
\end{equation}
Another case is where $\fb$ decays exponentially in Fourier space (analytic $\fb$),
\begin{equation}\label{q:fexa}
  |\fb_{(0,k)}^{}| \le \frac{\nu_0^2\,\Gr_0^{}}{2\kpz}\Bigl(\frac{2\alpha}{1+2\alpha}\Bigr)^{1/2}\,\ex^{\alpha(1-|k|/\kpz)},
\end{equation}
where $\alpha>0$.
Finally, we consider algebraically-decaying $\fb$,
\begin{equation}\label{q:fhs}
  |\fb_{(0,k)}^{}| \le \frac{\nu_0^2\kpz^{s-1}\Gr_0}{\sqrt2\zeta(2+2s)^{1/2}}|k|^{-s}
\end{equation}
for $s>5/2$ in order that $\fb\in H^2$.
In both \eqref{q:fexa} and \eqref{q:fhs}, the constants have been chosen so that $|\gb^{-1}\fb|/(\mu\kpz)^2\le\Gr_0$ to be consistent with \eqref{q:gr}.
We stress that no assumptions are made on $\ft$ (other than it being in $H^2$ needed for Theorem~\ref{t:aw}).

Our main result on determining modes follows.

\begin{theorem}\label{t:modes}
Let $\dw$ be the solution of \eqref{q:dwbeta} with $f\in H^2(\Dom)$.
Then the low modes $\Proj_\kappa\,\w$ are determining, i.e.\ 
$\lim_{t\to\infty}|\Proj_\kappa\,\dw(t)|_{L^2}^{}=0$ implies that $\lim_{t\to\infty}|\dw(t)|_{L^2}^{}=0$, if any of the following conditions hold for constants $c_4$, $c_5$, $c_6$ and $\eps$ sufficiently small:\break
\noindent
(a)~if $\fb$ satisfies \eqref{q:fkpf} and
\begin{equation}\label{q:bdkpf}
  \kappa/\kpz > c_4(\nu_0)\,\max\bigl\{ \eps^{1/4} M_0^{1/4},
	\,(\kpf/\kpz)^{3/8}\,\Gr_0^{1/4} \bigr\}; \text{ or}
\end{equation}
(b)~if $\fb$ satisfies \eqref{q:fhs} and
\begin{equation}\label{q:bdhs}
  \kappa/\kpz > c_5(\nu_0,s)\,\max\bigl\{ \eps^{1/4}M_0^{1/4},\,
	\, \Gr_0^{\,(2s+5)/(8s+14)} \bigr\}; \text{ or}
\end{equation}
(c)~if $\fb$ satisfies \eqref{q:fexa} and
\begin{equation}\label{q:bdexa}
  \kappa/\kpz > c_6(\nu_0)\,\max\bigl\{ \eps^{1/4} M_0^{1/4},
	\,F_\alpha\bigl(\nu_0^{-1/2}\Gr_0\bigr)^{3/8}\Gr_0^{1/4} \bigr\}
\end{equation}
where $F_\alpha$ is defined in \eqref{q:falp} below.
\end{theorem}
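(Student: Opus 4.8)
The plan is to run the Jones--Titi high-mode argument behind Theorem~\ref{t:jt93m}, but to split the background vorticity $\w=\wb+\wt$ and estimate the zonal and non-zonal feedings separately, since it is the weak feeding of the dominant zonal part that must produce the gain over the non-rotating threshold $\Gr_0^{1/2}$. First I would apply $(1-\Proj_\kappa)$ to \eqref{q:dwbeta} and pair with the high-mode part $\dwm{>}$. The rotation term yields $\tfrac{\kpz}{\eps}(\dy_x\ilapl\dwm{>},\dwm{>})$, which vanishes because $\dy_x$ is skew-adjoint and commutes with the self-adjoint $\ilapl$, exactly as in the non-rotating case; the self-advection $(\dy(\pss,\dwm{>}),\dwm{>})=0$ by the Jacobian identity. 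Writing $\dw=\dwm{<}+\dwm{>}$ and $\dps=\ilapl\dwm{<}+\ilapl\dwm{>}$, what remains is
\begin{equation*}
  \tfrac12\ddt{|\dwm{>}|_{L^2}^2} + \mu|\gb\dwm{>}|_{L^2}^2
  = -(\dy(\pss,\dwm{<}),\dwm{>}) - (\dy(\dps,\w),\dwm{>}).
\end{equation*}
The objective is to dominate the right-hand side by $\tfrac12\mu|\gb\dwm{>}|_{L^2}^2 + \alpha(t)\,|\dwm{>}|_{L^2}^2 + \beta(t)$, with $\int_t^{t+1}\alpha\dtau$ small compared with $\mu\kappa^2$ and $\beta(t)\to0$.

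The first term involves only the low modes $\dwm{<}$, every norm of which tends to zero by hypothesis (all norms being equivalent on the range of $\Proj_\kappa$); bounding it via Young's inequality against $\tfrac14\mu|\gb\dwm{>}|_{L^2}^2$, with $\pss$ controlled uniformly for large $t$ by \eqref{q:wgm}, leaves a contribution to $\beta(t)$ alone. The same holds for the part of $(\dy(\dps,\w),\dwm{>})$ carrying the low piece $\ilapl\dwm{<}$ of $\dps$. This isolates the genuine high-mode self-interaction $(\dy(\ilapl\dwm{>},\w),\dwm{>})$, which I split as $\w=\wb+\wt$. For the non-zonal part I would bound $(\dy(\ilapl\dwm{>},\wt),\dwm{>})$ using the localisation $|\gb\ilapl\dwm{>}|_{L^2}\le\kappa^{-1}|\dwm{>}|_{L^2}$, Ladyzhenskaya's inequality, and the pointwise smallness $|\wt(t)|_{L^2}^2\le\eps M_0/\kpz^2$ from \eqref{q:aw}; this gives a coefficient of $|\dwm{>}|_{L^2}^2$ of order $\mu\kpz^2(\eps M_0)^{1/2}$, dominated by $\mu\kappa^2$ precisely when $\kappa/\kpz\gtrsim\eps^{1/4}M_0^{1/4}$ --- the first alternative in each of \eqref{q:bdkpf}--\eqref{q:bdexa}.

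The zonal feeding $(\dy(\ilapl\dwm{>},\wb),\dwm{>})$ is the crux. Here I would use that $\wb=\wb(y)$, so $\dy(\ilapl\dwm{>},\wb)=\dy_x\ilapl\dwm{>}\,\dy_y\wb$ is non-zonal and pairs only with $\Pt\dwm{>}$, cf.\ \eqref{q:bb0}. The improvement over $\Gr_0^{1/2}$ comes from the spectral decay of $\wb$: applying $\Pb$ to \eqref{q:dwdt} kills the $\beta$-term and leaves only the nonlinear transfer $\Pb\dy(\pt,\wt)$, which is quadratic in the non-zonal field and hence $\eps$-small by Theorem~\ref{t:aw}, so that for large $t$ the zonal vorticity $\wb$ inherits the decay of the zonal forcing $\fb$ up to an $\eps$-small correction. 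Feeding this decay --- bandwidth-limited \eqref{q:fkpf}, algebraic \eqref{q:fhs}, or analytic \eqref{q:fexa} --- into the bilinear estimate and splitting $\wb$ into a low-wavenumber bulk and a rapidly decaying tail balanced against $\mu\kappa^2$, I expect a feeding coefficient of order $\mu\kpz^2(\kpf/\kpz)^{3/4}\Gr_0^{1/2}$ in case~(a), and its analogues in (b) and (c), producing the second alternatives in \eqref{q:bdkpf}--\eqref{q:bdexa}.

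Collecting these estimates, for $t\ge\TB_0$ and $\eps$ sufficiently small the high-mode energy satisfies $\ddt{|\dwm{>}|_{L^2}^2} + \gamma(t)\,|\dwm{>}|_{L^2}^2\le\beta(t)$ with $\int_t^{t+1}\gamma\dtau$ bounded below by a positive multiple of $\mu\kappa^2$ --- which is exactly what the conditions on $\kappa$ secure --- and $\beta(t)\to0$. A generalised Gronwall lemma of Foias--Prodi/Jones--Titi type then forces $|\dwm{>}(t)|_{L^2}\to0$, and with $|\dwm{<}(t)|_{L^2}\to0$ this yields $|\dw(t)|_{L^2}\to0$, proving (a)--(c) simultaneously. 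The main obstacle is the zonal estimate of the preceding paragraph: one must convert the prescribed decay of $\fb$ into usable decay of $\wb$ --- controlling the $\eps$-small non-zonal self-interaction en route --- and then optimise the split of $\wb$ against the dissipation to extract the sub-$\Gr_0^{1/2}$ powers; it is here, and only here, that the three cases genuinely differ.
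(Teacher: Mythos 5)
Your proposal follows essentially the same route as the paper's proof: the same high-mode energy inequality with the splitting $\w=\wb+\wt$, the same key device of controlling the zonal vorticity through the $x$-averaged equation (where, by \eqref{q:bb0}, the only nonlinear transfer is $\Pb\dy(\pt,\wt)$, quadratic in the non-zonal field and hence $\eps$-small by Theorem~\ref{t:aw}, so $\wb$ inherits the decay of $\fb$), the same bulk/tail split $\wb=\wfb{<f}+\wfb{>f}$ with $\kpf$ optimized case by case, and the same generalized Gronwall lemma to conclude. The only cosmetic difference is in the non-zonal term, where you use Ladyzhenskaya together with the pointwise bound $|\wt|^2\le\eps M_0/\kpz^2$ while the paper uses Agmon together with the time-integrated bound $\int_t^{t+1}|\gb\wt|^2\dtau\le\eps M_0/\nu_0$; both yield a threshold compatible with $\eps^{1/4}M_0^{1/4}$ under the assumed smallness of $\eps$.
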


\noindent
We note that for large $u$, $F_\alpha(u)=\log u/(2\alpha)+\cdots$, so the last term scales essentially as $\Gr_0^{1/4}$.
The smallness requirement on $\eps$, \eqref{q:eps-kf}, \eqref{q:eps-hs} and \eqref{q:eps-exa} below, is not essential and can be removed at the expense of more messy expressions for the above bounds.

As is apparent from the proof below, heuristically one may regard the $\eps^{1/4}M_0^{1/4}$ in \eqref{q:bdkpf} and \eqref{q:bdexa} as arising from the non-zonal forcing $\ft$ and the term involving $\Gr_0$ as arising from the zonal forcing $\fb$.
That the latter bound scales essentially as $\Gr_0^{1/4}$ as opposed to $\Gr_0^{1/2}$ in the general (``non-rotating'') case suggests that, in the limit of small $\eps$, the differentially rotating NSE \eqref{q:dwbeta} essentially consists of a one-dimensional ``mean'' plus a small amount of two-dimensional ``noise'', which agrees with what one would expect on physical grounds.
Barring the discovery of yet unforeseen cancellations, it is therefore unlikely that one could obtain a bound with a smaller power of $\Gr_0$ than $\frac14$.
Similar considerations apply to \eqref{q:bdhs}, where since $\fb\in H^2$ by hypothesis, one must take $s > \sfrac52$, giving a limiting worst-case dependence of $\Gr_0^{\,5/17}$.

\medskip
Analogous to Theorem \ref{t:modes}, we have the following bounds on determining nodes:

\begin{theorem}\label{t:nodes}
Let $\dw$ be the solution of \eqref{q:dwbeta} with $f\in H^2(\Dom)$.
Then there exists a set of determining nodes $\DN=\{\xb_1,\cdots,\xb_N\}$ whenever
\begin{align}
  &N > c_7(\nu_0)\,\max\bigl\{ \eps^{1/2}M_0^{1/2}, (\kpf/\kpz)^{1/3}\,\Gr_0^{2/3} \bigr\}
	\text{ when $\fb$ satisfies \eqref{q:fkpf}; or}\label{q:nkpf}\\
  &N > c_8(\nu_0,s)\,\max\bigl\{ \eps^{1/2}M_0^{1/2}, \,\Gr_0^{\,(4s+5)/(6s+5)}\bigr\}
	\text{ when $\fb$ satisfies \eqref{q:fhs}; or}\label{q:nhs}\\
  &N > c_9(\nu_0)\,\max\bigl\{ \eps^{1/2}M_0^{1/2}, \,F_{\alpha}(\nu_0^{-1}\,\Gr_0^{2/3})^{1/3}\,\Gr_0^{2/3} \bigr\}
	\text{ when $\fb$ satisfies \eqref{q:fexa}},\label{q:nexa}
\end{align}
for constants $c_7$, $c_8$ and $c_9$, $F_{\alpha}$ defined in \eqref{q:nfalp} below and $\eps\le c\,\nu_0/M_0$.
\end{theorem}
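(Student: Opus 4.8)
The plan is to follow the proof of Theorem~\ref{t:modes} line for line, replacing the orthogonal projection $\Proj_\kappa$ by a nodal interpolation inequality of Jones--Titi type \cite{jones-titi:93}. Everything again reduces to the difference equation \eqref{q:dwbeta}. Testing it against $\dw$, the transport term $(\dy(\pss,\dw),\dw)$ vanishes by the Jacobian identity, and the rotation term also disappears because $(\dy_x\dps,\dw)=(\dy_x\dps,\lapl\dps)=-\tfrac12\int_\Dom\dy_x|\gb\dps|^2\dx=0$; one is left with
\begin{equation*}
  \tfrac12\ddt{|\dw|_{L^2}^2} + \mu|\gb\dw|_{L^2}^2 = -(\dy(\dps,\w),\dw),
\end{equation*}
so that, exactly as for the determining modes, the whole difficulty resides in this single nonlinear term.

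First I would split $\w=\wb+\wt$ and write $(\dy(\dps,\w),\dw)=(\dy(\dps,\wt),\dw)+(\dy(\dps,\wb),\dw)$. The non-zonal piece is controlled by the smallness of $\wt$ from Theorem~\ref{t:aw} (via \eqref{q:aw}--\eqref{q:aw'}); this is the origin of the $\eps^{1/2}M_0^{1/2}$ term, it is insensitive to the shape of $\fb$, and---since here one only needs the $H^1$-level bound on $\wt$ that already appears in the modes argument---it transcribes essentially verbatim, the square in $\eps^{1/2}M_0^{1/2}$ reflecting $N\sim(\kappa/\kpz)^2$. The zonal piece simplifies, as $\dy_x\wb=0$, to $\int_\Dom\dy_x\dps\,\dy_y\wb\,\dw\dx$ and must be bounded by a norm of $\dy_y\wb$. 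Since the rotation term has zero $x$-average, $\wb$ obeys a one-dimensional forced--dissipative balance whose smoothness is fixed by $\fb$; inserting the three profiles \eqref{q:fkpf}, \eqref{q:fhs}, \eqref{q:fexa}, together with the a~priori derivative bounds \eqref{q:wgm}, produces the $(\kpf/\kpz)$-, $s$- and $F_\alpha$-dependent factors multiplying the power of $\Gr_0$.

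The new ingredient replacing the spectral gap is the nodal inequality: covering $\Dom$ by $N$ congruent cells of side $\eta\sim L/\sqrt N$ with one node $\xb_i$ apiece, one controls $|\dw|_{L^2}^2$ by $c\,\eta^2|\gb\dw|_{L^2}^2$ plus a term that vanishes as the sampled values $\dw(\xb_i,t)$ do. Substituting this into the energy identity turns the dissipation into a genuine damping of size $\sim\nu_0 N\,|\dw|_{L^2}^2$, while the nodal values feed a forcing that tends to $0$ as $t\to\infty$ by hypothesis. Choosing $N$ above the stated thresholds makes this damping dominate the coefficient of $|\dw|_{L^2}^2$ left over once the nonlinear term has been estimated (a small multiple of $|\gb\dw|_{L^2}^2$ being absorbed into the dissipation), and the conclusion $|\dw(t)|_{L^2}\to0$ then follows from the generalised Gr\"onwall lemma of Foias--Prodi type \cite{foias-prodi:67,jones-titi:93}, just as in Theorem~\ref{t:modes}.

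The main obstacle is that the nodal inequality is genuinely weaker than orthogonal truncation: it pays for $L^2$ control with the full gradient, and---because no smallness is available for the mean flow---the zonal estimate effectively costs one further derivative of $\wb$ than its spectral counterpart. Consequently that estimate must be closed by interpolating between the enstrophy-level bound and a higher-order bound on $\wb$, and it is the re-optimisation of the cell size $\eta\sim L/\sqrt N$ against this higher norm that degrades the $\Gr_0^{1/2}$ scaling of the modes into the $\Gr_0^{2/3}$ scaling of \eqref{q:nkpf}--\eqref{q:nexa}. Checking that the damping still beats the nonlinear coefficient uniformly in time, using only the attractor-level bounds \eqref{q:wgm} and \eqref{q:aw} and the smallness $\eps\le c\,\nu_0/M_0$, is the crux of the argument.
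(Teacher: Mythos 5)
Your outline follows the paper's skeleton (enstrophy identity for $\dw$, the splitting $\w=\wb+\wt$, smallness of $\wt$ from Theorem~\ref{t:aw}, Jones--Titi nodal inequalities, Lemma~\ref{t:gronwall}), but two of its load-bearing steps fail as stated. The first is the nodal inequality itself: in 2d there is \emph{no} estimate of the form $|\dw|_{L^2}^2 \le c\,(L^2/N)\,|\gb\dw|_{L^2}^2 + \bigl(\text{term vanishing with } \max_i|\dw(\xb_i)|\bigr)$, because point values are not controlled by $H^1$ (the logarithmic failure of $H^1\hookrightarrow C^0$); for instance, modifying a fixed low Fourier mode by logarithmic cut-offs that vanish at every node adds only $o(1)$ to $|\gb\dw|^2$ yet annihilates the nodal term, so the inequality would force $1\le c\,(2\pi)^2/N$. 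The valid inequalities \eqref{q:jtnodes1}--\eqref{q:jtnodes2} necessarily carry $|\lapl u|^2$ on the right, and the only way to match that against the available dissipation $\mu|\gb\dw|^2$ is to apply them with $u=\gb\dps$, so that $\lapl u=\gb\dw$ --- which is what the paper does, at the price that the quantity entering the Gronwall argument is $\eta(\gb\dps)$, i.e.\ sampled values of the \emph{velocity} difference rather than of the vorticity difference $\dw$. Your version, with vorticity samples and only one derivative on the right, is not a theorem; repairing it either changes the nodal data or pushes the whole argument up one derivative, altering every nonlinear estimate.

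The second gap is the zonal term. The claim that ``$\wb$ obeys a one-dimensional forced--dissipative balance whose smoothness is fixed by $\fb$'' is not correct: the zonal-mean equation is also forced by the Reynolds stress $\Pb\,\dy(\pt,\wt)$, which is not determined by $\fb$, so no spectral decay of $\fb$ by itself controls the high zonal wavenumbers of $\wb$. The paper's actual mechanism is the splitting $\wb=\wfb{<f}+\wfb{>f}$ at a cutoff $\kpf$ (given in case (a), optimised in cases (b) and (c)): the low part needs only the enstrophy-level bound \eqref{q:wgm} through $|\gb\wfb{<f}|_\infty\le c\,(\kpz\kpf)^{1/2}|\gb\w|$, producing the condition $N^3\gtrsim\nu_0^{-1}(\kpf/\kpz)\,\Gr_0^2$, i.e.\ \eqref{q:n<fbd} --- this, not a re-optimisation of the cell size, is the origin of the $\Gr_0^{2/3}$ scaling; the high part requires the separate a priori estimates \eqref{q:genwfb}--\eqref{q:bd-wbfx}, obtained by testing the vorticity equation against $\wfb{>f}$ and using \eqref{q:bb0} together with Theorem~\ref{t:aw}, so that $\wfb{>f}$ is driven only by the $O(\eps M_0)$ Reynolds stress and by the tail $(1-\Proj_{\kpf})\fb$, which is small by hypothesis \eqref{q:fkpf}, \eqref{q:fhs} or \eqref{q:fexa}. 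Without this splitting and the evolution estimate for $\wfb{>f}$, ``inserting the three profiles'' yields no control, and the exponents in \eqref{q:nkpf}--\eqref{q:nexa} (in particular $(4s+5)/(6s+5)$ and the $F_\alpha$ factor, which arise from equating \eqref{q:n<fbd} with the tail bound to fix $\kpf$) cannot be derived.
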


\noindent
These nodal results are weaker than their modal counterparts, with the ``zonal part'' scaling essentially as $\Gr_0^{2/3}$ rather than $\Gr_0^{1/2}$.
We believe that this is an artefact of our approach and not intrinsic to the problem.
As in the modal case, the smallness requirement for $\eps$ is not essential and can be removed in exchange for messier expressions in the above bounds.



\section{Proof: Determining Modes}\label{s:pf-modes}

This section is devoted to proving Theorem~\ref{t:modes} using more refined estimates of the nonlinear terms and of the zonal vorticity $\wb$.
For conciseness, when no ambiguity may arise, we write $|\cdot|_p^{}:=|\cdot|_{L^p}^{}$, $|\cdot|:=|\cdot|_{L^2}^{}$ and $(\cdot,\cdot):=(\cdot,\cdot)_{L^2}^{}$.
As usual, $c$ denotes a dimensionless constant whose value may differ in each use.
We also assume for convenience that $\eps\le1$.

First, we collect some basic inequalities.
From the Fourier expansion, we have the following ``improved'' and ``reverse'' Poincar\'e inequalities:
\begin{align}
   &\kappa\,|\dwm>|_{2}^{} \,\le |\gb\dwm>|_{2}^{}\label{q:arrPoi1}\\
   &|\gb\dwm<|_{2}^{} \le \kappa\,|\dwm<|_{2}^{}.\label{q:arrPoi2}
\end{align}
Next, we recall Agmon's inequality in 2d,
\begin{equation}\label{q:ag2}
    |u|_{\infty}^{} \le c\,|u|_2^{1/2}|\lapl u|_2^{1/2}
\end{equation}
for $u \in H^2(\Dom)$.
For functions depending on $y$ and $t$ only, we have the improved version (with the $L^p$ norms always taken over $\Dom$),
\begin{equation}\label{q:ag1}
    |\bar{v}|_{\infty}^{} \le c\,\kpz^{1/2}|\bar{v}|_2^{1/2}|\gb\bar{v}|_2^{1/2}.
\end{equation}

We note the following integral inequality:
Let $\nu>0$ be fixed and $u(t)\ge0$, and suppose that for any $t\ge1$
\begin{equation}
   \int_0^t u(\tau)\,\ex^{\nu(\tau-t)}\dtau \le M,
\end{equation}
then for any $t>0$,
\begin{equation}\label{q:intineq1}
   \int_t^{t+1} u(\tau) \dtau
	\le \int_t^{t+1} \ex^{\nu(\tau-t)} u(\tau)\dtau
	\le \int_0^{t+1} \ex^{\nu(\tau-t)} u(\tau)\dtau
	\le \ex^\nu M.
\end{equation}
Next, we quote the following Gronwall-type lemma from \cite{foias-manley-temam-treve:83,jones-titi:93}.
\begin{lemma}\label{t:gronwall}
Let $\alpha$ and $\beta$ be locally integrable functions on $(0,\infty)$ satisfying
\begin{equation}\begin{aligned}
   &\liminf_{t\to\infty}\int_t^{t+1} \alpha(\tau)\dtau > 0,
   \hbox to 30pt{} \limsup_{t\to\infty}\int_t^{t+1} \alpha^{-}(\tau)\dtau < \infty,\\
   &\lim_{t\to\infty}\int_t^{t+1} \beta^{+}(\tau)\dtau = 0,
\end{aligned}\end{equation}
where $\alpha^{-} := \max\{-\alpha,0\}$ and $\beta^{+} := \max\{\beta,0\}$.
Suppose $\xi$ is an absolutely continuous non-negative function on $(0,\infty)$ such that
\begin{equation}
   \ddt{\xi} + \alpha\xi \leq \beta
\end{equation}
almost everywhere.
Then $\xi(t) \to 0$ as $t \to \infty$.
\end{lemma}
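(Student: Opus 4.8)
The plan is to solve the differential inequality explicitly via an integrating factor and then exploit the three hypotheses to show that the two resulting terms vanish. Writing $A(t):=\int_0^t\alpha(\tau)\dtau$, I would multiply $\ddt{\xi}+\alpha\xi\le\beta$ by $\ex^{A(t)}$ and integrate from a base point $t_0$ to $t$; since $\beta\le\beta^+$ this yields
\begin{equation*}
  \xi(t) \le \xi(t_0)\,\ex^{-(A(t)-A(t_0))} + \int_{t_0}^t \beta^+(s)\,\ex^{-(A(t)-A(s))}\ds.
\end{equation*}
Everything then reduces to controlling the integrating factor $\ex^{-(A(t)-A(s))}$ for $s\le t$.

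The key estimate — and the crux of the argument — is that the first two hypotheses together force $A(t)-A(s)$ to grow linearly in $t-s$, up to a bounded oscillation. One picks $a>0$ and $T$ so that $\int_\tau^{\tau+1}\alpha\ge a$ for all $\tau\ge T$; this gives $A(n+1)-A(n)\ge a$ on integer unit intervals, so $A$ increases by at least $a$ per step. The danger is that $A$ could dip sharply within a unit interval, inflating $\ex^{-(A(t)-A(s))}$; this is precisely where the second hypothesis enters, since $\limsup_t\int_t^{t+1}\alpha^-<\infty$ bounds any such dip by a constant $b$, because $A(s)-A(n)\ge-\int_n^{n+1}\alpha^-\ge-b$ for $s\in[n,n+1]$. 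Combining the per-step growth with the bounded within-step oscillation gives constants $C\ge1$ and $a'>0$ with
\begin{equation*}
  \ex^{-(A(t)-A(s))}\le C\,\ex^{-a'(t-s)} \qquad\text{for all } t\ge s\ge T.
\end{equation*}

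With this decay in hand the two terms are dispatched. The first term is bounded by $\xi(t_0)\,C\,\ex^{-a'(t-t_0)}\to0$. For the second, fix $\eta>0$ and use the third hypothesis to choose $T_1\ge T$ with $\int_\tau^{\tau+1}\beta^+\le\eta$ for $\tau\ge T_1$. Splitting the integral at $T_1$, the tail $\int_{T_1}^t\beta^+(s)\,\ex^{-a'(t-s)}\ds$ is estimated by summing over unit intervals: on $[n,n+1]$ one has $\ex^{-a'(t-s)}\le\ex^{-a'(t-n-1)}$ and $\int_n^{n+1}\beta^+\le\eta$, so the sum telescopes against the geometric series $\sum_{k\ge0}\ex^{-a'k}$ to give a bound $\le C''\eta$. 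The remaining piece $\int_{t_0}^{T_1}\beta^+(s)\,\ex^{-a'(t-s)}\ds$ is a fixed quantity multiplied by $\ex^{-a'(t-T_1)}$, which tends to $0$ as $t\to\infty$. Hence $\limsup_{t\to\infty}\xi(t)\le C''\eta$, and since $\eta$ is arbitrary, $\xi(t)\to0$.

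I expect the main obstacle to be the careful bookkeeping in the key estimate: one must simultaneously use the positivity of the running average of $\alpha$ and the boundedness of its negative part to tame the integrating factor between integer times, and then carry the resulting geometric decay through the convolution with $\beta^+$ while invoking the smallness of $\beta^+$ only for large times.
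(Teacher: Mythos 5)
Your proof is correct. The paper itself does not prove this lemma---it is quoted from \cite{foias-manley-temam-treve:83,jones-titi:93}---and your integrating-factor argument, in which the exponential decay bound $\ex^{-(A(t)-A(s))}\le C\,\ex^{-a'(t-s)}$ is extracted from the two hypotheses on $\alpha$ (per-unit-interval growth plus a bounded dip via $\alpha^-$) and then convolved against $\beta^+$ by summing over unit intervals, is essentially the standard proof given in those references.
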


We first use the bound \eqref{q:aw} on the non-zonal $\wt$ to derive a useful control on the {\em zonal\/} vorticity $\wb$.
Fixing some $\kpf\ge\kpz$, let $\wfb{>f}=(1-\Proj_{\kpf})\wb$ and $\fb^{\sst{>f}}=(1-\Proj_{\kpf})\fb$.
We multiply \eqref{q:dwdt} by $\wfb{>f}$ in $L^2$ and compute
\begin{align*}
  \frac12\ddt{}|\wfb{>f}|^2 &+ \mu|\gb\wfb{>f}|^2
   = - (\dy(\psi,\w),\wfb{>f}) + (f,\wfb{>f})\\
	&= - (\dy(\pt,\wt),\wfb{>f}) + (\fb^{\sst{>f}},\wfb{>f}) \hspace{90pt} \text{by \eqref{q:bb0}}\\
	&\le |\gb\pt|_{\infty}^{} |\wt|_2^{}|\gb\wfb{>f}|_2^{} + \frac{2}{\mu}|\gb^{-1}\ffb{>f}|^2 + \frac{\mu}{8}|\gb\wfb{>f}|^2\\
	&\le \frac{2}{\mu}|\gb\pt|_\infty^2|\wt|^2 + \frac{2}{\mu}|\gb^{-1}\ffb{>f}|^2 + \frac{\mu}{4}|\gb\wfb{>f}|^2\\
	&\le \frac{c}{\nu_0}\,\eps M_0 |\gb\pt|\,|\gb\wt| + \frac{2}{\mu}|\gb^{-1}\ffb{>f}|^2 + \frac{\mu}{4}|\gb\wfb{>f}|^2\\
	&\le \frac{c\,\eps M_0}{\nu_0\,\kpz^2} |\gb\wt|^2 + \frac{2}{\mu}|\gb^{-1}\ffb{>f}|^2 + \frac{\mu}{4}|\gb\wfb{>f}|^2
\end{align*}
where we have used \eqref{q:aw} and \eqref{q:ag2} for the penultimate line.
This gives
\begin{equation}\label{q:genwfb}
   \ddt{}|\wfb{>f}|^2 + \frac{3}{2}\,\mu|\gb\wfb{>f}|^2 \le \frac{c\,\eps M_0}{\nu_0\,\kpz^2}|\gb\wt|^2 + \frac{4}{\mu}|\gb^{-1}\ffb{>f}|^2.
\end{equation}
Using Poincar\'e on the lhs and multiplying by $\ex^{\nu_0 t}$, this gives us
\begin{equation}
   \ddt{}\bigl(\ex^{\nu_0 t}|\wfb{>f}|^2\bigr) + \frac\mu2\ex^{\nu_0 t}|\gb\wfb{>f}|^2
	\le \frac{c\,\eps M_0}{\nu_0\,\kpz^2}|\gb\wt|^2\ex^{\nu_0 t} + \frac{4\,\ex^{\nu_0 t}}{\mu}|\gb^{-1}\ffb{>f}|^2,
\end{equation}
and, upon integration in time,
\begin{equation}\label{q:wfbt}\begin{aligned}
  |\wfb{>f}(t)|^2 &+ \frac\mu2 \int_0^t \ex^{\nu_0(\tau-t)}|\gb\wfb{>f}|^2 \dtau\\
	&\le \ex^{-\nu_0t}|\wfb{>f}(0)|^2 + \frac{c\,\eps M_0}{\nu_0\,\kpz^2}\int_0^t |\gb\wt|^2\ex^{\nu_0(\tau-t)}\dtau + \frac{4}{\mu\nu_0}|\gb^{-1}\ffb{>f}|^2\\
	&\le \frac{c_*\,\eps^2 M_0^2}{2\nu_0^2\,\kpz^2} + \frac{4}{\mu\nu_0}|\gb^{-1}\ffb{>f}|^2
\end{aligned}\end{equation}
where we have used \eqref{q:aw'} for the last line, taken $t$ sufficiently large and adjusted the constant.

We now consider the consequences of the hypotheses \eqref{q:fkpf}--\eqref{q:fhs}.
First, when $\fb$ satisfies \eqref{q:fkpf}, we have $\fb^{\sst{>f}}=0$, giving, using \eqref{q:intineq1} and the fact that $\ex^{\nu_0}<3$,
\begin{equation}\label{q:bd-wbfk}
   \int_t^{t+1} |\gb\wb^{\sst{>f}}(\tau)|_{L^2}^2 \dtau
	\le 3c_*\eps^2M_0^2/\nu_0^3.
\end{equation}
Next, for $\fb$ satisfying \eqref{q:fhs}, we have the bound
\begin{equation}
   |\gb^{-1}\ffb{>f}|^2
	\le \frac{\nu_0^4(\kpz/\kpf)^{2s+1}}{(2s+1)\zeta(2s+2)}\frac{\Gr_0^2}{\kpz^4}.
\end{equation}
Using this in \eqref{q:wfbt} and dropping $|\wfb{>f}(t)|^2$ on the lhs gives
\begin{equation}\label{q:bd-wbfs}
   \int_0^t \ex^{\nu_0(\tau-t)}|\gb\wfb{>f}|^2 \dtau
	\le \frac{c_*\eps^2M_0^2}{\nu_0^3} + \frac{8\,(\kpz/\kpf)^{2s+1}\nu_0}{(2s+1)\,\zeta(2s+2)}\,{\Gr_0^2},
\end{equation}
Finally, when $\fb$ satisfies \eqref{q:fexa}, we have
\begin{equation}
   |\gb^{-1}\ffb{>f}|^2
	\le \nu_0^4\frac{2\alpha}{1+2\alpha}\frac{\ex^{2\alpha(1-\kpf/\kpz)}}{1-\ex^{-2\alpha}}\frac{\Gr_0^2}{\kpz^4}
	\le \frac{\nu_0^4}{\kpz^4}\ex^{2\alpha(1-\kpf/\kpz)}\Gr_0^2.
\end{equation}
Using this in \eqref{q:wfbt} and dropping $|\wfb{>f}(t)|^2$ on the lhs as before gives
\begin{equation}\begin{aligned}\label{q:bd-wbfx}
   \int_0^t \ex^{\nu_0(\tau-t)}|\gb\wfb{>f}|^2 \dtau
	&\le \frac{c_*\eps^2M_0^2}{\nu_0^3} + 8\nu_0\ex^{2\alpha(1-\kpf/\kpz)}\Gr_0^2.
\end{aligned}\end{equation}
For both \eqref{q:bd-wbfs} and \eqref{q:bd-wbfx}, suitable $\kpf$ will be chosen when these inequalities are used below in the proof of Theorem \ref{t:modes}.

\begin{proof}[Proof of Theorem \ref{t:modes}]
We multiply \eqref{q:dwbeta} by $\dwm>$ in $L^2$ to obtain
\begin{equation*}\begin{aligned}
   (\dy_t\dw,\dwm>) + (\dy(\pss,\dw),\dwm>) +(\dy(\dps,\w),\dwm>) &+ \frac{\kpz}{\eps}(\dy_x\dps,\dwm>)\\
   &= (\mu\lapl\dw, \dwm>).
\end{aligned}\end{equation*}
Integration by parts shows that the $\kpz/\eps$ term is 0, so
\begin{equation}\label{q:dwbeta2}\begin{aligned}
    \frac{1}{2}\ddt{}|\dwm>|_2^2 &+ \mu|\gb\dwm>|_2^2\\
     &= -(\dy(\pss,\dw),\dwm>) - (\dy(\dpsm<,\w),\dwm>) - (\dy(\dpsm>,\w),\dwm>).
\end{aligned}\end{equation}
For the first term on the right hand side, we use the fact that $(\dy(\pss,\dwm>),\dwm>) = 0$, to get
\begin{equation*}
   (\dy(\pss,\dw),\dwm>) = (\dy(\pss,\dwm<),\dwm>).
\end{equation*}
As for the third term on the right hand side of \eqref{q:dwbeta2}, we write $\w = \wb + \wt$ to get
\begin{equation}\label{q:psszonal}
   (\dy(\dpsm>,\w),\dwm>) = (\dy(\dpsm>, \wt),\dwm>) + (\dy(\dpsm>,\wb), \dwm>),
\end{equation}
the last term of which becomes, by \eqref{q:bb0},
\begin{equation*}
   (\dy(\dpsm>,\wb),\dwm>) = (\dy(\dpstm>,\wb),\dwtm>).
\end{equation*}
For some $\kpf \geq \kpz$ to be fixed later, we split $\wb = \wfb{<f} + \wfb{>f}$, where $\wfb{<f} = \Proj_{\kpf}\wb$ and $\wfb{>f} = \wb-\wfb{<f}$.
Then
\begin{equation}\label{q:midpt}
	(\dy(\dpstm>,\wb),\dwtm>) = (\dy(\dpstm>,\wfb{<f}),\dwtm>) + (\dy(\dpstm>,\wfb{>f}),\dwtm>).
\end{equation}
Thus \eqref{q:dwbeta2} becomes
\begin{equation}\begin{aligned}\label{q:dwbeta3}
	\frac{1}{2}\ddt{}|\dwm>|^2 &+ \mu|\gb\dwm>|^2 \\
		= &-(\dy(\pss,\dwm<),\dwm>) - (\dy(\dpsm<,\w),\dwm>) - (\dy(\dpsm>,\wt),\dwm>)\\
		&- (\dy(\dpstm>,\wfb{<f}),\dwtm>) - \dy(\dpstm>,\wfb{>f}),\dwtm>).
\end{aligned}\end{equation}

We bound the first two terms on the right hand side (recall $|\cdot|_p^{} := |\cdot|_{L^p}^{}$) by
\begin{align}
   |(\dy(\pss,\dwm<),\dwm>)| &\leq |\gb\pss|_\infty^{} |\gb\dwm>|_2^{} |\dwm<|_2^{}\notag\\
	&\leq \frac{4}{\mu}|\gb\pss|_\infty^2|\dwm<|_2^2 + \frac{\mu}{16}|\gb\dwm>|_2^2,\label{q:modes1}\\
   |(\dy(\dpsm<,\w),\dwm>)|
	&\leq |\gb\dwm>|_2^{}\,|\gb\dpsm<|_\infty^{} |\w|_2^{}\notag\\
	&\leq \frac{4}{\mu}|\gb\dpsm<|_\infty^2|\w|_2^2 + \frac{\mu}{16}|\gb\dwm>|_2^2.\label{q:modes2}
\end{align}
We then bound the third term by
\begin{align}
   |(\dy(\dpsm>,\wt),\dwm>)|
	&\le |\gb\dpsm>|_\infty^{} |\gb\wt|_2^{}|\dwm>|_2^{}\notag\\
	&\le c\,|\gb\dpsm>|_2^{1/2}|\gb\dwm>|_2^{1/2}|\dwm>|_2^{}|\gb\wt|_2^{} &&\text{by \eqref{q:ag2}}\notag\\
	&\le \frac{c}{\kappa}\, |\gb\dwm>|_2^{}|\dwm>|_2^{}|\gb\wt|_2^{} &&\text{by \eqref{q:arrPoi1}}\notag\\
	&\le \frac{\mu}{16}|\gb\dwm>|^2 + \frac{c}{\mu\kappa^2}|\gb\wt|^2|\dwm>|^2,\label{q:modes3}
\end{align}
and the fourth term by
\begin{align}
    |(\dy(\dpsm>,\wfb{<f}),\,&\dwm>)| = |(\dy(\dpsm>,\gb\wfb{<f}),\gb\dpsm>)|\notag\\
	&\le c\,|\lapl\wfb{<f}|_\infty^{}|\gb\dpsm>|_2^2\notag\\
	&\le c\,\frac{\kpz^{1/2}}{\kappa^2}|\lapl\wfb{<f}|^{1/2}|\gb^3\wfb{<f}|^{1/2}|\dwm>|^2 &&\text{by \eqref{q:arrPoi1} and \eqref{q:ag1}}\notag\\
	&\le c\,\frac{(\kpz\kpf^3)^{1/2}}{\kappa^3}|\gb\wfb{<f}|\,|\dwm>|\,|\gb\dwm>| &&\text{by \eqref{q:arrPoi2}}\notag\\
	&\le \frac{\mu}{16}|\gb\dwm>|^2 + \frac{c\,\kpz\kpf^3}{\mu\kappa^6}|\gb\omega|^2|\dwm>|^2.\label{q:modes4}
\end{align}
Finally, the last term on the rhs of \eqref{q:dwbeta3} can be bounded as
\begin{align}
   |(\dy(\dpsm>,\wfb{>f}),\,&\dwm>)| \leq |\gb\dpsm>|_2^{}|\wfb{>f}|_{\infty}^{}|\gb\dwm>|_2^{}\notag\\
	&\le c\,\frac{\kpz^{1/2}}{\kappa}|\wfb{>f}|^{1/2}|\gb\wfb{>f}|^{1/2}|\gb\dwm>|\,|\dwm>| &&\text{by \eqref{q:arrPoi1} and \eqref{q:ag1}}\notag\\
	&\le c\,\frac{\kpz^{1/2}}{\kappa\kpf^{1/2}}|\gb\wfb{>f}|\,|\gb\dwm>|\,|\dwm>| &&\text{by \eqref{q:arrPoi1}}\notag\\
	&\le \frac\mu{16}|\gb\dwm>|^2 + \frac{c\kpz}{\mu\kappa^2\kpf}|\gb\wfb{>f}|^2|\dwm>|^2.\label{q:modes5}
\end{align}
Putting all these together and applying \eqref{q:arrPoi1} on the lhs gives
\begin{align*}
	\ddt{}|\dwm>|^2 &+ |\dwm>|^2\Bigl(\mu\kappa^2 - \frac{c}{\mu\kappa^2}\,|\gb\wt|^2 - \frac{c\kpz\kpf^3}{\mu\kappa^6}|\gb\omega|^2 - \frac{c\kpz}{\mu\kappa^2\kpf}|\gb\wfb{>f}|^2\Bigr)\\
	&\leq \frac{8}{\mu}|\gb\pss|_\infty^2|\dwm<|^2 + \frac{8}{\mu}|\gb\dpsm<|_\infty^2|\w|^2.
\end{align*}
We aim to apply Lemma \ref{t:gronwall} to this, with $\xi = |\dwm>|^2$, $\alpha$ the large bracket on the lhs and $\beta$ the rhs.
Now the hypothesis of the lemma on $\beta$ is satisfied since $|\dwm<(t)|\to0$ as $t\to\infty$ (and what multiply it are bounded when integrated in time), and that on $\xi$ follows from the standard regularity of the NSE.
The hypothesis on $\alpha$ would follow from
\begin{equation}\label{q:col2}
	\limsup_{t\to\infty}\int_t^{t+1} \Bigl(\frac1{\mu\kappa^2}|\gb\wt|^2 + \frac{\kpz\kpf^3}{\mu\kappa^6}|\gb\omega|^2 + \frac{\kpz}{\mu\kappa^2\kpf}|\gb\wfb{>f}|^2\Bigr) \dtau < c\mu\kappa^2,
\end{equation}
which in turn is implied by the conditions
\begin{align}
   &\limsup_{t\to\infty}\int_t^{t+1} |\gb\wt|^2 \dtau < c\,\mu^2\kappa^4,\label{q:ci1}\\
   &\limsup_{t\to\infty}\int_t^{t+1} |\gb\omega|^2 \dtau < \frac{c\,\mu^2\kappa^8}{\kpz\kpf^3},\label{q:ci2}\\
   &\limsup_{t\to\infty}\int_t^{t+1} |\gb\wfb{>f}|^2 \dtau < \frac{c\,\mu^2\kappa^4\kpf}{\kpz}.\label{q:ci3}
\end{align}
For the first condition, we note that \eqref{q:aw} implies
\begin{align*}
   \int_t^{t+1} |\gb\wt|^2 \dtau
	\le {\eps M_0}/{\nu_0}
\end{align*}
so \eqref{q:ci1} would follow if
\begin{equation}\label{q:epbd}
   \kappa/\kpz > c\,(\eps M_0/\nu_0^3)^{1/4}.
\end{equation}
By \eqref{q:wgm}, the second condition is implied by
\begin{equation}\label{q:bdk-pf1}
   c\,\Gr_0^2\nu_0 < \mu^2\kappa^8/(\kpz\kpf^3)
   \quad\Leftrightarrow\quad
   \kappa/\kpz > c\,\nu_0^{-1/8}(\kpf/\kpz)^{3/8}\,\Gr_0^{1/4}.
\end{equation}

\medskip
We first consider the case when $\fb$ satisfies \eqref{q:fkpf}.
By \eqref{q:bd-wbfk}, we have
\begin{equation*}
   \int_t^{t+1} |\gb\wfb{>f}|^2 \dtau
	\le c\,\eps^2M_0^2/\nu_0^3
\end{equation*}
so in this case \eqref{q:ci3} would hold if
\begin{equation}\label{q:bdk-pf2}
   \kappa/\kpz > c\,(\eps M_0)^{1/2}\nu_0^{-5/4}(\kpz/\kpf)^{1/4}.
\end{equation}
This bound is dominated by \eqref{q:epbd} when
\begin{equation}\label{q:eps-kf}
   \eps M_0 \le c\,\nu_0^2(\kpf/\kpz),
\end{equation}
which we hereby assume.
Combining \eqref{q:epbd}, \eqref{q:bdk-pf1} and \eqref{q:bdk-pf2}, we recover \eqref{q:bdkpf}.

\medskip
Next, we consider the case when $\fb$ satisfies \eqref{q:fhs}.
By \eqref{q:intineq1} and \eqref{q:bd-wbfs}, we have
\begin{equation}\label{q:pwk-pf1}
   \int_t^{t+1} |\gb\wfb{>f}|^2 \dtau
	\le c\,\eps^2M_0^2/\nu_0^3 + c\,c_\zeta(s)\,\nu_0\,(\kpz/\kpf)^{2s+1}\,\Gr_0^2 =: I_1
\end{equation}
where $1/c_\zeta(s):=(2s+1)\,\zeta(2s+2)$.
So \eqref{q:ci3} would be satisfied if $I_1<c\mu^2\kappa^4(\kpf/\kpz)$;
analogously to what we did with \eqref{q:col2}, this is in turn implied by the inequalities
\begin{align}
   (\kappa/\kpz)^4 &> c\,(\eps M_0)^2\nu_0^{-5}(\kpz/\kpf)\label{q:ci3s1}\\
   (\kappa/\kpz)^4 &> c\,c_\zeta(s)\,\nu_0^{-1}(\kpz/\kpf)^{2s+2}\,\Gr_0^2.\label{q:ci3s2}
\end{align}
Since both \eqref{q:bdk-pf1} and \eqref{q:ci3s2} must be satisfied, we equate these bounds and find
\begin{equation}\label{q:pwk-kpf}
   (\kpf/\kpz)^{2s+7/2} = c\,c_\zeta(s)\nu_0^{-1/2}\Gr_0,
\end{equation}
which fixes $\kpf$ and turns both \eqref{q:bdk-pf1} and \eqref{q:ci3s2} to
\begin{equation}\label{q:pwk-pf4}
	\kappa/\kpz > c\,\bigl(c_{\zeta}(s)^{3/2}\,\nu_0^{-(s+5/2)}\,\Gr_0^{2s+5}\bigr)^{1/(8s+14)}.
\end{equation}
Using \eqref{q:pwk-kpf}, \eqref{q:ci3s1} becomes
\begin{equation}
   \kappa/\kpz > c_s\,(\eps\,M_0)^{1/2} \nu_0^{-5/4+1/(16s+28)}\Gr_0^{-1/(8s+14)}
\end{equation}
with $c_s=c\,c_\zeta(s)^{-1/(8s+14)}$, noting that since $s>5/2$ the exponent for $\Gr_0$ lies between $-1/34$ and $0$, giving a weak dependence.
This term is dominated by \eqref{q:epbd} when
\begin{equation}\label{q:eps-hs}
   \eps\,M_0 \le c\,c_s^{-4}\,\nu_0^{2-1/(4s+7)}\Gr_0^{2/(4s+7)}.
\end{equation}
Assuming this, \eqref{q:bdhs} follows from \eqref{q:epbd} and \eqref{q:pwk-pf4}.


\medskip
Finally we consider $\fb$ satisfying \eqref{q:fexa}.
By \eqref{q:intineq1} and \eqref{q:bd-wbfx}, we have
\begin{equation}\label{q:exk-pf1}
   \int_t^{t+1} |\gb\wfb{>f}|^2\dtau
	\le c\,(\eps M_0)^2/\nu_0^3 + c\,\nu_0 \,\ex^{2\alpha(1-\kpf/\kpz)}\Gr_0^2.
\end{equation}
As before, \eqref{q:ci3} would be satisfied if both of the following hold:
\begin{align}
   &(\kappa/\kpz)^4 > c\,(\eps M_0)^2\nu_0^{-5}(\kpz/\kpf)\label{q:ci3a1}\\
   &(\kappa/\kpz)^4 > c\,\nu_0^{-1}\ex^{2\alpha(1-\kpf/\kpz)}(\kpz/\kpf)\,\Gr_0^2.\label{q:ci3a2}
\end{align}
Equating the bounds from \eqref{q:bdk-pf1} and \eqref{q:ci3a2}, we arrive at
\begin{equation}
   (\kpf/\kpz)^{5/2}\ex^{2\alpha(\kpf/\kpz-1)} = c_\alpha\,\nu_0^{-1/2}\,\Gr_0,
\end{equation}
which can be inverted to give
\begin{equation}\label{q:falp}
   \kpf/\kpz = F_\alpha\bigl(\Gr_0/\nu_0^{1/2}\bigr)
   \quad\text{where}\quad
   F_\alpha^{-1}(y)=y^{5/2}\ex^{2\alpha (y-1)}/c_\alpha.
\end{equation}
This fixes $\kpf$.
Now the bound from \eqref{q:epbd} would dominate that from \eqref{q:ci3a1} when
\begin{equation}\label{q:eps-exa}
   \eps M_0 \le c\,\nu_0^2\,(\kpf/\kpz).
\end{equation}
Assuming this, \eqref{q:bdexa} follows from \eqref{q:epbd} and \eqref{q:bdk-pf1}.
\end{proof}



\section{Proof: Determining Nodes}\label{s:pf-nodes}

In this section, we prove Theorem~\ref{t:nodes}. We follow the notations and conventions of \S\ref{s:pf-modes}.
We use several crucial inequalities proved in \cite{jones-titi:93}.
Following them, let the points $\xb_1,\cdots,\xb_N$ be placed at regular spacings within our periodic domain $\Dom=[0,L]\times[-L/2,L/2]$.
Defining
\begin{equation}\label{q:defeta}
	\eta(u) := \max_{1\leq i \leq N}|u(\xb_i)|
\end{equation}
for all $u \in H^2(\Dom)$ satisfying \eqref{q:v0int},
we have the following bounds:
\begin{equation}\label{q:jtnodes1}
	|u|_{L^2}^2 \hspace{37pt} \leq c_\eta \,L^2\eta(u)^2 + c_\eta \frac{L^4}{N^2}|\lapl u|_{L^2}^2,
\end{equation}
\begin{equation}\label{q:jtnodes2}
	|\gb u|_{L^2}^2, |u|_{L^\infty}^2 \leq c_\eta\, N \eta(u)^2 + c_\eta \frac{L^2}{N}|\lapl u|_{L^2}^2,
\end{equation}
for an absolute constant $c_\eta$.

\begin{proof}[Proof of Theorem \ref{t:nodes}]
We multiply \eqref{q:dwbeta} by $\dw$ in $L^2$ to obtain
\begin{equation*}
	(\dy_t\dw,\dw) + (\dy(\pss,\dw),\dw) + (\dy(\dps,\w),\dw) + \frac{\kpz}{\eps}(\dy_x\dps,\dw) = \mu(\lapl\dw,\dw).
\end{equation*}
The second and fourth term vanish upon integration by parts, giving
\begin{equation}\label{q:nodes0}
	\frac{1}{2}\ddt{}|\dw|^2 + \mu|\gb\dw|^2 = -(\dy(\dps,\w),\dw).
\end{equation}
We use \eqref{q:bb0} and the splitting $\w = \wb + \wt$ to write the rhs as
\begin{equation}
	-(\dy(\dps,\w),\dw) = -(\dy(\dps,\wt),\dw) - (\dy(\dps,\wb),\dw).
\end{equation}
As in \eqref{q:midpt}, we split $\wb = \wfb{<f} + \wfb{>f}$ where $\wfb{<f} = \Proj_{\kpf}\wb$ and $\wfb{>f} = \wb-\wfb{<f}$, for some $\kpf \geq \kpz$ to be fixed later.
Now \eqref{q:nodes0} becomes
\begin{equation}\label{q:dwnodes}
	\frac{1}{2}\ddt{}|\dw|^2 + \mu|\gb\dw|^2 = -(\dy(\dps,\wt),\dw) - (\dy(\dps,\wfb{<f}),\dw) - (\dy(\dps,\wfb{>f}),\dw).
\end{equation}

For $\DN$, we pick $N$ equally spaced points $\{\xb_1,\cdots,\xb_N\}$.
We bound the first term on the rhs using \eqref{q:jtnodes2},
\begin{equation}\label{q:nodes1}\begin{aligned}
   \bigl|(\dy(\dps,\wt),\dw)\bigr|
	&\le |\gb\dps|_\infty^{}|\gb\wt|_2^{}|\dw|_2^{}\\
	&\le \frac{c\mu N}{L^2}|\gb\dps|_\infty^2 + \frac{cL^2}{\mu N}|\gb\wt|^2|\dw|^2\\
	&\le \frac{c\mu N}{L^2}\Bigl[N\eta(\gb\dps)^2 + \frac{L^2}{N}|\gb\dw|^2\Bigr] + \frac{cL^2}{\mu N}|\gb\wt|^2|\dw|^2.
\end{aligned}\end{equation}
Similarly, we bound the second term on the rhs of \eqref{q:dwnodes} using Young and \eqref{q:jtnodes1} as
\begin{equation}\label{q:nodes2}\begin{aligned}
   \bigl|(\dy(\dps,\wfb{<f}),\,&\dw)\bigr|
	\le |\gb\wfb{<f}|_\infty^{}|\gb\dps|_2^{}|\dw|_2^{}\\
	&\le c(\kpz\kpf)^{1/2}|\gb\omega|\,|\gb\dps|\,|\dw|\\
	&\le \frac{c\mu N^2}{L^4}|\gb\dps|^2 + \frac{cL^4}{\mu N^2}\kpz\kpf|\gb\omega|^2|\dw|^2\\
	&\le \frac{c\mu N^2}{L^4}\Bigl[L^2\eta(\gb\dps)^2 + \frac{L^4}{N^2}|\gb\dw|^2\Bigr] + \frac{cL^4}{\mu N^2}\kpz\kpf|\gb\omega|^2|\dw|^2.
\end{aligned}\end{equation}
The final term in \eqref{q:dwnodes} we bound as
\begin{equation}\label{q:nodes3}\begin{aligned}
   \bigl|(\dy(\dps,\wfb{>f}),\,&\dw)\bigr|
   \le |\gb\dps|_\infty^{}|\gb\wfb{>f}|_2^{}|\dw|_2^{}\\
   &\le \frac{c\mu N}{L^2}|\gb\dps|_{\infty}^2 + \frac{cL^2}{\mu N}|\gb\wfb{>f}|^2|\dw|^2\\
   &\le \frac{c\mu N}{L^2}\Bigl[N\eta(\gb\dps)^2 + \frac{L^2}{N}|\gb\dw|^2\Bigr] + \frac{cL^2}{\mu N}|\gb\wfb{>f}|^2|\dw|^2.
\end{aligned}\end{equation}
Applying \eqref{q:jtnodes2} to the lhs of \eqref{q:dwnodes} as
\begin{equation}
   \frac12\ddt{\;}|\dw|^2 + \frac{c\mu N}{L^2}|\dw|^2 - \frac{c\mu N^2}{L^2}\eta(\gb\dps)^2
	\le \frac12\ddt{\;}|\dw|^2 + \mu|\gb\dw|^2,
\end{equation}
and putting together \eqref{q:nodes1}--\eqref{q:nodes3} gives, after some rearrangement,
\begin{equation}\label{q:ncol0}\begin{aligned}
    \ddt{\;}|\dw|^2 &+ |\dw|^2\Bigl[\frac{c\mu N}{L^2} - \frac{cL^2}{\mu N}|\gb\wt|^2 - \frac{cL^4}{\mu N^2}\kpz\kpf|\gb\omega|^2 - \frac{cL^2}{\mu N}|\gb\wfb{>f}|^2\Bigr]\\
	&\le \frac{c\mu N^2}{L^2}\eta(\gb\dps)^2.
\end{aligned}\end{equation}

As in the proof of Theorem \ref{t:modes}, we aim to apply Lemma \ref{t:gronwall} to $\xi = |\dw|^2$, $\alpha$ being the large bracket on the lhs and $\beta$ the rhs of \eqref{q:ncol0}.
The hypothesis of the lemma on $\beta$ is met because $\gb\dps(\xb_i,t) \to 0$ as $t \to \infty$ for all $i$ and $|\gb\omega|$ is bounded, while the hypothesis on $\xi$ follows from the regularity of the NSE.
The hypothesis on $\alpha$ would follow from, noting that $\nu_0=c\mu/L^2$,
\begin{equation}\label{q:ni}
   \limsup_{t\to\infty}\int_t^{t+1} \Bigl( \frac{c}{\nu_0 N}|\gb\wt|^2 + \frac{c}{\nu_0 N^2}\frac\kpf\kpz|\gb\omega|^2 + \frac{c}{\nu_0N}|\gb\wfb{>f}|^2 \Bigr) \dtau
	< \nu_0 N.
\end{equation}
With no loss of generality, we require that this inequality is satisfied by each term separately (adjusting the $c$ as usual).

For the first term, we note that \eqref{q:aw} implies
\begin{equation}
	\int_t^{t+1} |\gb\wt|^2 \dtau \leq \eps M_0/\nu_0.
\end{equation}
so \eqref{q:ni} for $|\gb\wt|^2$ would be satisfied for
\begin{equation}\label{q:nepbd}
	N^2 > c\,\eps M_0/\nu_0^3.
\end{equation}
For the second term, we have by \eqref{q:wgm}
\begin{equation}
   \int_t^{t+1} |\gb\omega|^2 \dtau \le c\,\nu_0\Gr_0^2,
\end{equation}
so the $|\gb\omega|$ part of \eqref{q:ni} is implied by
\begin{equation}\label{q:n<fbd}
   N > \frac{c}{\nu_0^{1/3}} \Bigl(\frac{\kpf}{\kpz}\Bigr)^{1/3}\Gr_0^{2/3}.
\end{equation}
For the inequality involving $|\gb\wfb{>f}|^2$, we need to handle the cases separately.

\medskip
We consider first when $\fb$ satisfies \eqref{q:fkpf}.
By \eqref{q:bd-wbfk},
\begin{equation}
   \int_t^{t+1} |\gb\wfb{>f}|^2 \dtau
	\le c\,\eps^2 M_0^2/\nu_0^3,
\end{equation}
so the $|\gb\wfb{>f}|$ part of \eqref{q:ni} holds if
\begin{equation}\label{q:nbdk-pf1}
	N > c\,\eps M_0/\nu_0^{5/2}.
\end{equation}
Since $\nu_0\le1$, this bound is dominated by \eqref{q:nepbd} when $\eps M_0\le c\,\nu_0^2$.
Assuming this, \eqref{q:nkpf} follows from \eqref{q:nepbd} and \eqref{q:n<fbd}.

\medskip
For $\fb$ instead satisfying \eqref{q:fhs}, we recall from \eqref{q:pwk-pf1} that
\begin{equation}\label{q:pwk-ni3}
	\int_t^{t+1} |\gb\wfb{>f}|^2 \dtau
	\le c\,\eps^2M_0^2/\nu_0^3 + c\,c_{\zeta}(s)\,\nu_0\,(\kpz/\kpf)^{2s+1}\,\Gr_0^2 = I_1,
\end{equation}
where $1/c_{\zeta}(s) = (2s+1)\,\zeta(2s+2)$.
Therefore, the $|\gb\wfb{>f}|^2$ part of \eqref{q:ni} would be satisfied if $I_1 \leq c\,\nu_0^2 N^2$;
analogously to \eqref{q:pwk-pf1}, this in turn is implied by
\begin{equation}\label{q:ni3s1}
	N^2 > c\,(\eps M_0)^2\nu_0^{-5},
\end{equation}
\begin{equation}\label{q:ni3s2}
	N^2 > c\,c_\zeta(s)\,\nu_0^{-1}(\kpz/\kpf)^{2s+1}\,\Gr_0^2.
\end{equation}
Since \eqref{q:n<fbd} and \eqref{q:ni3s2} must both hold, we equate these bounds to fix $\kpf$:
\begin{equation}\label{q:pwk-nkpf}
	(\kpf/\kpz)^{2s+5/3} = c\,c_\zeta(s)\,\nu_0^{-1}\Gr_0^{2/3},
\end{equation}
with which both \eqref{q:n<fbd} and \eqref{q:ni3s2} now read
\begin{equation}\label{q:pwk-ni2s2}
	N > c\,(c_\zeta(s)\nu_0^{-1}\,\Gr_0^{4s+5})^{1/(6s+5)}.
\end{equation}

As with the case when $\fb$ satisfies \eqref{q:fkpf}, \eqref{q:nhs} follows from \eqref{q:nepbd} and \eqref{q:pwk-ni2s2} when $\eps M_0 \le c\,\nu_0^2$.

Finally we consider $\fb$ satisfying \eqref{q:fexa}.
By \eqref{q:intineq1} and \eqref{q:bd-wbfx},
\begin{align*}
   \int_t^{t+1} |\gb\wfb{>f}|^2 \dtau
	&\leq c\,(\eps M_0)^2/\nu_0^3 + c\,\nu_0\,\ex^{2\alpha(1-\kpf/\kpz)}\,\Gr_0^2.
\end{align*}
As before, the $|\gb\wfb{>f}|^2$ part of \eqref{q:ni} is satisfied when both of the following hold:
\begin{equation}\label{q:ni3a1}
	N^2 > c\,(\eps M_0)^2\nu_0^{-5},
\end{equation}
\begin{equation}\label{q:ni3a2}
	N^2 > c\,\nu_0^{-1}\ex^{2\alpha(1-\kpf/\kpz)}\,\Gr_0^2.
\end{equation}
We equate the rhs of \eqref{q:n<fbd} and \eqref{q:ni3a2} to obtain
\begin{equation*}
	(\kpf/\kpz)^{2/3}\,\ex^{2\alpha(\kpf/\kpz-1)} = c_\alpha\,\nu_0^{-1}\,\Gr_0^{2/3},
\end{equation*}
which we invert to find
\begin{equation}\label{q:nfalp}
	\kpf/\kpz = F_{\alpha}(\nu_0^{-1}\Gr_0^{2/3})
\end{equation}
where $F_{\alpha}^{-1}(y) := y^{2/3}\ex^{2\alpha(y-1)}/c_\alpha$ is that in \eqref{q:falp}, abusing notation slightly (possibly different $c_\alpha$).
As before, assuming $\eps M_0 \le c\,\nu_0^2$, \eqref{q:nexa} follows from \eqref{q:nepbd} and \eqref{q:n<fbd}.
This concludes the proof.
\end{proof}



\nocite{temam:iddsmp}
\nocite{vallis:aofd}

\nocite{rhines:75}
\nocite{tgs:87}

\nocite{schochet:94}
\nocite{gallagher-straymond:07}


\end{document}